\newcommand{\N}{\mathds{N}}
\newcommand{\R}{\mathds{R}}
\newcommand{\C}{\mathds{C}}
\newcommand{\1}{\mathds{1}}
\newcommand{\loc}{\mathrm{loc}}
\newcommand{\spt}{\mathrm{spt}}
\newcommand{\unif}{\mathrm{unif}}
\DeclareMathOperator{\TextIm}{Im}
\renewcommand{\Im}{\TextIm}
\newcommand{\M}{\mathcal{M}}
\renewcommand{\H}{\mathcal{H}}
\providecommand{\abs}[1]{\left\lvert#1\right\rvert}
\providecommand{\norm}[1]{\left\lVert#1\right\rVert}
\providecommand{\set}[1]{\left\{ #1\right\}}
\providecommand{\form}{\tau}
\newcommand{\from}{\colon}
\newcommand\llim{
\mathchoice{\vcenter{\hbox{${\scriptstyle{-}}$}}}
{\vcenter{\hbox{$\scriptstyle{-}$}}}
{\vcenter{\hbox{$\scriptscriptstyle{-}$}}}
{\vcenter{\hbox{$\scriptscriptstyle{-}$}}}}
\newcommand\rlim{
\mathchoice{\vcenter{\hbox{${\scriptstyle{+}}$}}}
{\vcenter{\hbox{$\scriptstyle{+}$}}}
{\vcenter{\hbox{$\scriptscriptstyle{+}$}}}
{\vcenter{\hbox{$\scriptscriptstyle{+}$}}}}
\theoremstyle{plain} 
\newtheorem{theorem}{Theorem}[section]
\newtheorem{lemma}[theorem]{Lemma}
\newtheorem{proposition}[theorem]{Proposition}
\theoremstyle{definition}
\newtheorem*{definition}{Definition}
\newtheorem{remark}[theorem]{Remark}
\begin{document}

\title{Absence of absolutely 
continuous spectrum for the Kirchhoff Laplacian on radial trees}

\author{Pavel Exner, Christian Seifert and Peter Stollmann}

\maketitle

\begin{abstract}
  In this paper we prove that the existence of absolutely continuous spectrum of the Kirchhoff Laplacian on a radial metric tree graph together with a finite complexity of the geometry of the tree implies that the tree is in fact eventually periodic.
  This complements the results by Breuer and Frank in \cite{BreuerFrank2009} in the discrete case as well as for sparse trees in the metric case.

  MSC2010: 34L05, 34L40, 35Q40

  Key words: Schr\"odinger operators, quantum graphs, trees, absolutely continuous spectrum.
\end{abstract}

\section{Introduction}

Quantum graphs and their discrete counterparts have been a subject of intense interest recently -- see \cite{BerkolaikoKuchment2013} for the bibliography -- both as a source of practically important models and an object of inspiring mathematical complexity. One of the important question concerns transport on such graphs, in particular, the presence or absence of the absolutely continuous spectrum of the corresponding Hamiltonian.

It was demonstrated in \cite{BreuerFrank2009} for the Laplacian $\Delta$ on a 
discrete radially symmetric rooted tree graph, that if the sequence of 
branching numbers $(b_n)$ is bounded and $\Delta$ has nonempty absolutely 
continuous spectrum then $(b_n)$ is eventually periodic, in other words, we may
 think of the geometry of the tree being eventually periodic. This result makes
 it possible to make a similar conclusion for metric graphs as long as they are
 equilateral using the known duality -- cf. \cite{Pankrashkin2012} and 
references therein. This tells us nothing, however, about the spectrum in the 
metric setting beyond the equilateral case when the geometry of the tree 
encoded in the edge lengths should come into play.

The aim of this note is address this question and to prove the analogous result for the Laplacian on radial
metric trees with Kirchhoff boundary conditions. The proof will combine three facts:
\begin{enumerate}
  \item
    a unitary equivalence of the Kirchhoff Laplacian on the tree with a direct sum of
self-adjoint operators on halflines -- see \cite[Theorem 3.5]{Solomyak2004},
  \item
    an adapted version of Remling's Oracle Theorem \cite[Theorem 2]{Remling2007} for such halfline operators,
  \item
    an adapted version of absence of absolutely continuous spectrum for ``finite local complexity'' \cite[Theorem 4.1]{KlassertLenzStollmann2011}.
\end{enumerate}

\noindent Sections 2 to 4 below are devoted respectively to these three facts; after discussing them we state and prove in Section 5 our main theorem and comment on its possible extensions.

\section{Radial tree graphs and unitary equivalence}

Let $\Gamma = (V,E)$ be a rooted radially symmetric metric tree graph with vertex set $V$ and edge set $E$.
Let $O\in V$ be the root, and for a vertex $v\in V$ of the $n$th generation of vertices let $b_n$ be the branching number of $v$,
i.e., the number of forward neighbors, and $t_n>0$ be the length of the path connecting the vertex $v$ with the root $O$.
We set $t_0:=0$ and $b_0:=1$.
In order to obtain a well-defined operator we will assume
\begin{equation}
\label{eq:edge}
  \inf_{n\in\N} (t_{n+1}-t_{n})>0,
\end{equation}
i.e., the the edge lengths are bounded away from $0$. Without loss of generality, we may also assume that
\begin{equation}
\label{eq:branching}
  \inf_{n\in\N} b_n>1,
\end{equation}
i.e., each vertex except the root will have at least two forward neighbors
(otherwise, by Kirchhoff boundary conditions, we can delete such vertices).

%
%
%

Let $H_\Gamma$ be the Laplacian on $\Gamma$ with Kirchhoff boundary conditions at the vertices except $O$, and Dirichlet boundary conditions at the root, i.e.,
$H_\Gamma$ is the unique self-adjoint operator associated with the form $\form$,
\begin{align*}
  D(\form) & := \set{u\in L_2(\Gamma);\; u'\in L_2(\Gamma),\, u(O) = 0,\, u\,\text{continuous on $\Gamma$}},\\
  \form(u,v) & := \int_{\Gamma} u(x) \overline{v(x)}\, dx.
\end{align*}

The following result was shown in \cite[Theorem 3.5]{Solomyak2004}, see also \cite[Proposition 5]{BreuerFrank2009}.
\begin{proposition}[{\cite[Theorem 3.5]{Solomyak2004}, \cite[Proposition 5]{BreuerFrank2009}}]
\label{prop:unitary_equivalence}
  $H_\Gamma$ is unitarily equivalent to
  \[A_0^{\rlim} \oplus \bigoplus_{k=1}^\infty (A_k^{\rlim}\otimes I_{\C^{b_1\cdots b_{k-1}(b_k-1)}}),\]
  where, for $k\geq 0$, $A_k^+$ is a linear operator in $L_2(t_k,\infty)$ defined by
  \begin{align*}
    D(A_k^{\rlim}) & := \Bigl\{u\in L_2(t_k,\infty);\; u \in W_2^2\Bigl(\bigcup_{n\geq k} (t_n,t_{n+1})\Bigr),\,u(t_k) = 0, \\
    & \qquad u(t_n\rlim) = \sqrt{b_n}u(t_n\llim),\, u'(t_n\rlim) = \frac{1}{\sqrt{b_n}} u'(t_n\llim) \quad(n> k)\Bigr\} \\
    (A_k^{\rlim} u)(t) & := -u''(t) \quad \Bigl(t\in \bigcup_{n\geq k} (t_n,t_{n+1})\Bigr).
  \end{align*}
\end{proposition}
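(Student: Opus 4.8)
The plan is to exploit the radial symmetry of $\Gamma$ to decompose $L_2(\Gamma)$ orthogonally into a \emph{radial} subspace together with, for every branching vertex, a finite-dimensional family of \emph{detail} modes, in such a way that $H_\Gamma$ preserves each summand and restricts on it to one of the operators $A_k^{\rlim}$. I would work throughout at the level of the quadratic form $\form$, which is the most convenient setting in which to track the boundary and matching conditions, since there only continuity and $L_2$-integrability of $u'$ enter. Parametrize $\Gamma$ by the radial coordinate $t$ (the distance to $O$) together with an edge label, so that the ``shell'' at radius $t\in(t_n,t_{n+1})$ consists of $m_{n+1}:=b_1\cdots b_n$ parallel copies of the interval. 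The key device is a weighted substitution: to a function $u$ that is constant on each shell, assign the single-variable function $g(t):=\sqrt{m_{n+1}}\,u(t)$ on $(t_n,t_{n+1})$. A direct computation gives $\norm{u}_{L_2(\Gamma)}=\norm{g}_{L_2(0,\infty)}$; since the weight is piecewise constant, $-u''$ corresponds to $-g''$; and, using $\sqrt{m_{n+1}}=\sqrt{b_n}\,\sqrt{m_n}$, the Kirchhoff conditions (continuity of $u$ and flux conservation $u'(t_n\llim)=b_n u'(t_n\rlim)$) turn \emph{precisely} into $g(t_n\rlim)=\sqrt{b_n}\,g(t_n\llim)$ and $g'(t_n\rlim)=\tfrac{1}{\sqrt{b_n}}g'(t_n\llim)$, while the Dirichlet condition at $O$ becomes $g(t_0)=0$. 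This identifies the radial subspace with $A_0^{\rlim}$, of multiplicity one.

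For $k\geq1$ I would, for each of the $b_1\cdots b_{k-1}$ vertices $v$ of generation $k$, consider the functions supported on the $b_k$ forward subtrees $\Gamma_1,\dots,\Gamma_{b_k}$ emanating from $v$, radial within the subtree structure with a common profile $h$ and coefficients $(c_1,\dots,c_{b_k})$ on the respective subtrees, and zero elsewhere. Continuity at $v$ against the identically vanishing value on the edge below $v$ forces $h(t_k)=0$, and flux conservation at $v$ then reads $h'(t_k\rlim)\sum_i c_i=0$, i.e. $\sum_i c_i=0$; this is the $(b_k-1)$-dimensional constraint that produces the stated multiplicity. The same weighted substitution $g(t):=\sqrt{\mu_n}\,h(t)$, now with $\mu_n:=\prod_{j=k+1}^n b_j$ the number of edges of a single subtree in shell $n$, shows that $u\mapsto g\otimes(c_1,\dots,c_{b_k})$, after restricting the coefficient vector to the hyperplane $\set{\sum_i c_i=0}\cong\C^{b_k-1}$, is a unitary from this subspace onto $L_2(t_k,\infty)\otimes\C^{b_k-1}$ intertwining $H_\Gamma$ with $A_k^{\rlim}\otimes I$. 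Summing over the $b_1\cdots b_{k-1}$ vertices of generation $k$ yields the multiplicity $b_1\cdots b_{k-1}(b_k-1)$.

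It then remains to verify orthogonality and completeness of this family of subspaces. Orthogonality is immediate when the supports are disjoint, and otherwise follows because a function that is radial on a subtree is orthogonal to any detail mode living below it: the relevant inner product factors through $\sum_i c_i=0$ and hence vanishes. Completeness I would establish shell by shell. The constructed functions induce on the shell at radius $t\in(t_n,t_{n+1})$ a linear span of dimension $1+\sum_{k=1}^{n}(b_1\cdots b_{k-1})(b_k-1)$, which telescopes (writing each term as $b_1\cdots b_k-b_1\cdots b_{k-1}$) to $b_1\cdots b_n=m_{n+1}$, the full dimension of the shell. Thus the mutually orthogonal subspaces exhaust $L_2(\Gamma)$, and assembling the partial unitaries yields the asserted equivalence.

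I expect the main obstacle to be not the ODE bookkeeping but the rigorous identification at the level of self-adjoint operators: one must check that the global unitary maps $D(\form)$ exactly onto the direct sum of the form domains of the $A_k^{\rlim}$, with the matching and Dirichlet conditions appearing correctly as traces, and that the standing assumptions \eqref{eq:edge} and \eqref{eq:branching} ensure that $H_\Gamma$ and each $A_k^{\rlim}$ are genuinely self-adjoint rather than merely symmetric. Carrying out the whole decomposition at the form level is what makes this step cleanest, as it sidesteps any direct analysis of deficiency indices.
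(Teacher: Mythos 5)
The paper offers no proof of this proposition at all --- it is imported verbatim from \cite[Theorem 3.5]{Solomyak2004} and \cite[Proposition 5]{BreuerFrank2009} --- so the only meaningful comparison is with the argument in those references. Your reconstruction (the radial subspace handled by the weighted substitution $g(t)=\sqrt{b_1\cdots b_n}\,u(t)$, which turns Kirchhoff matching into the $\sqrt{b_n}$-jump conditions; the $(b_k-1)$-dimensional families of detail modes at each generation-$k$ vertex forced by $h(t_k)=0$ and $\sum_i c_i=0$; orthogonality via $\sum_i c_i = 0$; and completeness via the telescoping shell-dimension count $1+\sum_{k=1}^{n}b_1\cdots b_{k-1}(b_k-1)=b_1\cdots b_n$) is precisely the standard Naimark--Solomyak decomposition used there, carried out correctly, so your proposal is sound and takes essentially the same route as the cited proof.
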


The preceding proposition reduces the the study of (the spectrum of) $H_\Gamma$ to the study of (the spectra of) the operators $A_k^{\rlim}$.
These are operators on halflines.
We will describe such operators by means of measures in the following way.

\begin{definition}
 A measure $\mu$ on $\R$ is called atomic, if $\spt \mu$ is countable, i.e., if there exists $J\subseteq \N$ such that
$\mu = \sum_{n\in J} \beta_n \delta_{t_n}$ with suitable $(\beta_n),(t_n)$ in $\R$.
\end{definition}

For sequences $(b_n)$ in $(1,\infty)$, $(t_n)$ in $\R$ satisfying \eqref{eq:edge} and \eqref{eq:branching}, we associate a measure
$\mu = \sum_{n=1}^\infty \beta_n \delta_{t_n}$, where $\beta_n:=\frac{\sqrt{b_n}+1}{\sqrt{b_n}-1}$ ($n\in\N$).
Then define $H_\mu$ in $L_2(\R)$ by
\begin{align*}
  D(H_\mu) & := \Bigl\{u\in L_2(\R);\; u \in W_2^2\Bigl(\bigcup_{n\in\N_0} (t_n,t_{n+1})\Bigr),\\
& \qquad u(t_n\rlim) = \sqrt{b_n}u(t_n\llim),\, u'(t_n\rlim) = \frac{1}{\sqrt{b_n}} u'(t_n\llim) \quad(n\in\N)\Bigr\} \\
(H_\mu u)(t) & := -u''(t) \quad \Bigl(t\in \bigcup_{n\in\N_0} (t_n,t_{n+1})\Bigr).
\end{align*}
Then $H_\mu$ is self-adjoint and non-negative.
In case $(t_n)$ in $(0,\infty)$ we can also consider the corresponding halfline operators with Dirichlet boundary conditions at $0$, then denoted by $H_\mu^+$.
Note that each $A_k^{\rlim}$ in Proposition \ref{prop:unitary_equivalence} can be associated to some atomic measure $\mu$ such that $A_k^{\rlim}$ unitarily equivalent (by translation) to $H_\mu^{\rlim}$.

\section{The Oracle Theorem}

A signed Radon measure $\mu$ on $\R$ is said to be translation bounded, if
\[\norm{\mu}_\loc:= \sup_{x\in\R} \abs{\mu}([x,x+1]) < \infty.\]
Let $\M_{\loc,\unif}(\R)$ be the space of all translation bounded measures.

For an interval $I\subseteq \R$ and $C> 0$ let
\[\M^C(I):= \set{\1_{I}\mu;\; \mu\in \M_{\loc,\unif}(\R),\, \norm{\mu}_\loc\leq C}.\]
Equipped with the topology of vague convergence for measures $\M^C(I)$ is compact and hence metrizable (see, e.g., \cite[Proposition 4.1.2]{Seifert2012}).

For $\gamma>0$ let
\begin{align*}
\M_a^{\gamma}(\R)& :=\bigl\{\mu\in \M_{\loc,\unif}(\R);\; \mu\,\text{nonegative and atomic},\\
& \qquad \abs{s-t}\geq \gamma\quad(s,t\in\spt\mu, s\neq t)\bigr\}.
\end{align*}
Moreover, let
\[\M_a^{\gamma,\rlim}(\R):= \set{\mu\in\M_a^{\gamma}(\R);\; \spt\mu \subseteq [\gamma,\infty)}.\]
We will also need the subsets
\[\M_a^{C,\gamma}(\R) := \M^C(\R) \cap \M_a^{\gamma}(\R),\quad \M_a^{C,\gamma,\rlim}(\R) := \M^C(\R) \cap \M_a^{\gamma,\rlim}(\R).\]

\begin{remark}
  Let $\mu = \sum_{n\in\N} \beta_n\delta_{t_n} \in \M_a^{C,\gamma}(\R)$, where $\beta_n = \frac{\sqrt{b_n}+1}{\sqrt{b_n}-1}$.

  Let $s,t\in\R\setminus\spt\mu$, $s<t$. Let $u,v\in W_2^2((s,t)\setminus\spt\mu)$ satisfying
  \[u(t_n\rlim) = \sqrt{b_n}u(t_n\llim), \quad u'(t_n\rlim) = \frac{1}{\sqrt{b_n}} u'(t_n\llim) \quad(t_n\in(s,t)),\]
  and similarly for $v$.
  Then Green's formula holds in this case as well:
  \begin{align*}
      \int_s^t (-u'')(r) \overline{v}(r)\, dr - \int_s^t u(r) \overline{(-v'')}(r)\, dr
      & = W(u,\overline{v})(t) - W(u,\overline{v})(s),
  \end{align*}
  where $W(u,v)(t) = u'(t)v(t) - u(t)v'(t)$ is the Wronskian of $u$ and $v$ at $t$.
\end{remark}

\begin{remark}
\label{rem:Weyl-circle}
  Let $\mu\in \M_a^{C,\gamma}(\R)$, $z\in\C^+:=\set{z\in\C;\; \Im z>0}$.

  (a)
  Let $t\in\R\setminus\spt\mu$.
  Let $u_N(z,\cdot)$, $u_D(z,\cdot)$ be the two formal solutions of $H_\mu u = zu$ satisfying Neumann and Dirichlet conditions at $t$, i.e.
  \begin{align*}
    u_N(z,t) & = 1 & u_D(z,t) = 0 \\
    u_N'(z,t) & = 0 & u_D'(z,t) = 1.
  \end{align*}
  Let $b>t$.
  Consider the formal solution $u(z,\cdot)$ of $H_\mu u = zu$ satisfying a Robin condition with angle $\beta$ at $b$, i.e.
  \[\cos \beta u(z,b) + \sin\beta u'(z,b) = 0.\]
  We can write $u(z,\cdot) = u_N(z,\cdot)+m(z,t,b,\mu)u_D(z,\cdot)$
  Then
  \[m(z,t,b,\mu) = -\frac{u_N(z,b)\cot\beta + u_N'(z,b)}{u_D(z,b)\cot\beta + u_D'(z,b)}.\]
  Thus, $m(z,t,b,\mu)$ lies on the circle with center
  \[M(z,b,\mu) := \frac{W(u_N(z,\cdot),\overline{u_D(z,\cdot)})(b)}{W(u_D(z,\cdot),\overline{u_D(z,\cdot)})(b)}.\]
  and radius
  \[r(z,b,\mu):=\abs{\frac{W(u_N(z,\cdot),u_D(z,\cdot))(b)}{W(u_D(z,\cdot),\overline{u_D(z,\cdot)})(b)}} = \frac{1}{\abs{W(u_D(z,\cdot),\overline{u_D(z,\cdot)})(b)}}.\]
  By Green's formula we deduce that increasing the value $b$ yields smaller and smaller circles, where the smaller one is contained in the larger one.
  As in the Schr\"odinger case (see, e.g., \cite[Lemma III.1.4 and Corollary III.1.5]{CarmonaLacroix1990})
  one can show that $r(z,b,\mu)\to 0$ as $b\to\infty$, i.e., we are in the so-called limit point case.

  Taking $b\to\infty$, one therefore obtains $m(z,t,b,\mu)\to m_+(z,t,\mu)$ which lies in the interior of the circle to the value $b$, for all $b>0$.
  Analogous reasoning for $b\to-\infty$ yields a limit point $m_-(z,t,\mu)$.
  The limits are called the $m$-functions of $H_\mu$.

  (b)
  Similarly as in the case of Schr\"odinger operators one can show that there exist (unique up to multiplication by constants)
  formal solutions $u_\pm(z,\cdot)$ of $H_\mu u = zu$ lying in $L_2$ at $\pm\infty$.
  Then
  \[m_\pm(z,t,\mu) = \pm\frac{u_\pm'(z,t)}{u_\pm(z,t)} \quad(t\in\R\setminus\spt\mu).\]
\end{remark}

The $m$-functions are Herglotz functions, so its boundary values on the real line exist a.e. If $\mu\in\M_a^{C,\gamma,\rlim}(\R)$ then
$\C^+\ni z\mapsto m_+(z,0,\mu)$ depends only on the restriction of $\mu$ to $[0,\infty)$.
Note that the $m$-functions contain spectral information of the operator in the following way: the set
\[\Sigma_{ac}(H_\mu^{\rlim}) := \set{E\in\R;\; 0<\Im m_+(E+i0,0,\mu)<\infty}\]
is an essential support of the absolutely continuous spectrum of $H_\mu^+$. Hence, $H_\mu^+$ has absolutely continuous spectrum if and only if
$\Sigma_{ac}(H_\mu^{\rlim})$ has positive measure.

\begin{lemma}
\label{lem:conv_compacts}
  Let $C,\gamma>0$, $(\mu_n)$ in $\M_a^{C,\gamma,\rlim}(\R)$, $\mu\in\M_a^{C,\gamma}(\R)$, $\mu_n\to\mu$ vaguely. Then, for $t\in\R\setminus\spt\mu$, we have $m_\pm(\cdot,t,\mu_n)\to m_\pm(\cdot,t,\mu)$ uniformly on compact subsets of $\C^+$.
\end{lemma}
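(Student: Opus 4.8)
The plan is to prove the statement only for $m_+$; the claim for $m_-$ follows by the symmetric argument with the left truncation sent to $-\infty$, and is in fact trivial for $t<\gamma$ since all $\mu_n$, hence also the vague limit $\mu$, are supported in $[\gamma,\infty)$, so $m_-(\cdot,t,\mu_n)$ is the free halfline $m$-function, independent of $n$. Throughout fix $t\in\R\setminus\spt\mu$; as $\spt\mu$ is closed and $\gamma$-separated, $t$ lies in an open gap, and I will use below that the atoms of $\mu_n$ converge to those of $\mu$, so that $t\notin\spt\mu_n$ for all large $n$ and $m_+(\cdot,t,\mu_n)$ is well defined. The heart of the argument is the Weyl-disk picture of Remark \ref{rem:Weyl-circle}: for every $b$ lying in a gap of the support, $m_+(z,t,\mu)$ is the unique point common to all the closed disks $\overline{D_b(z,\mu)}$ with centre $M(z,b,\mu)$ and radius $r(z,b,\mu)$ (their radii tending to $0$ by the limit-point property), and likewise $m_+(z,t,\mu_n)\in\overline{D_b(z,\mu_n)}$ for every such $b$. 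I want to combine (i) continuity of $M(z,b,\cdot)$ and $r(z,b,\cdot)$ at a fixed truncation $b$ with (ii) a normal-families argument, so that no uniform-in-$n$ decay estimate for the radii is ever required.

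First I would establish continuity at a fixed $b$. Choose $b>t$ inside a gap of $\spt\mu$; I claim that $\mu_n\to\mu$ vaguely together with the $\gamma$-separation forces the atoms of $\mu_n$ in $[t,b]$ to converge, in both position and weight, to those of $\mu$, with no further atoms appearing. Indeed there are only finitely many atoms of $\mu$ in $[t,b]$, and testing $\mu_n\to\mu$ against bump functions supported near each atom of $\mu$, resp. near interior points of the gaps, shows that for large $n$ each atom of $\mu$ is approximated by exactly one atom of $\mu_n$ (again the $\gamma$-separation of the atoms of $\mu_n$ rules out merging or accumulation) while no mass survives elsewhere. The solutions $u_N(z,\cdot)$, $u_D(z,\cdot)$ on $[t,b]$ are then an ordered product of free transfer matrices $E_z(\ell)$, entire in $z$ and continuous in the interval length $\ell$, and diagonal jump matrices $\mathrm{diag}(\sqrt{b_j},1/\sqrt{b_j})$ at the atoms; since positions and weights converge, this finite product, and hence $u_N,u_N',u_D,u_D'$ at $b$, converges to the corresponding object for $\mu$, uniformly for $z$ in compact subsets of $\C^+$. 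Consequently $M(z,b,\mu_n)\to M(z,b,\mu)$ and $r(z,b,\mu_n)\to r(z,b,\mu)$ uniformly on compacts of $\C^+$, where one uses $W(u_D,\overline{u_D})(b)=2i\,\Im z\int_t^b\abs{u_D}^2>0$, bounded away from $0$ on compacts, to control the denominators.

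Now fix one such truncation $b_0$. From $m_+(z,t,\mu_n)\in\overline{D_{b_0}(z,\mu_n)}$ we get $\abs{m_+(z,t,\mu_n)}\le\abs{M(z,b_0,\mu_n)}+r(z,b_0,\mu_n)$, and by the convergence just proved the right-hand side is bounded on each compact $K\subseteq\C^+$, uniformly in $n$. Thus $(m_+(\cdot,t,\mu_n))_n$ is locally uniformly bounded, so by Montel's theorem every subsequence has a locally uniformly convergent further subsequence with holomorphic limit $g$. I then identify $g$: along such a subsequence, for each fixed $b$ and all large $n$ (so that $b$ also lies in a gap of $\mu_n$) one has $m_+(z,t,\mu_n)\in\overline{D_b(z,\mu_n)}$, and since the centres and radii converge to those of $\mu$, the limit satisfies $g(z)\in\overline{D_b(z,\mu)}$. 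Letting $b\to\infty$ and invoking the limit-point property of $\mu$ from Remark \ref{rem:Weyl-circle}(a), i.e. $\bigcap_b\overline{D_b(z,\mu)}=\set{m_+(z,t,\mu)}$, gives $g(z)=m_+(z,t,\mu)$ for every $z\in\C^+$. As every subsequential limit equals $m_+(\cdot,t,\mu)$, the whole sequence converges to it locally uniformly, which is the claim for $m_+$; the argument for $m_-$ is identical with the left truncation $a\to-\infty$.

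The main obstacle is the continuity at a fixed truncation $b$, and more precisely the passage from vague convergence to genuine convergence of the finitely many atom positions and weights in $[t,b]$; the $\gamma$-separation is exactly what excludes splitting, merging or accumulation and makes this step work. Everything after that is soft: the transfer-matrix product depends continuously on finitely many parameters, and the normal-families step replaces what would otherwise be a delicate uniform-in-$n$ decay estimate for the Weyl radii.
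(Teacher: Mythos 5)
Your proof is correct, and its core coincides with the paper's: both arguments derive, from vague convergence plus $\gamma$-separation, the convergence of atom positions and weights on compact intervals (with no spurious atoms), hence locally uniform (in $z$) convergence of the Neumann/Dirichlet solutions at a fixed truncation $b$, hence convergence of the Weyl-disk centres $M(z,b,\mu_n)$ and radii $r(z,b,\mu_n)$; and both then exploit the limit-point property of the limit measure $\mu$. Where you differ is only in the final assembly. The paper argues directly and quantitatively: given a compact $K\subseteq\C^+$ and $\varepsilon>0$, it first fixes $b$ with $\sup_{z\in K} r(z,b,\mu)<\varepsilon$ (possible since the radii decrease in $b$ and vanish pointwise), then takes $n$ large so that the centres and radii for $\mu_n$ are $\varepsilon$-close to those for $\mu$; since $m_+(z,t,\mu_n)$ lies in the $\mu_n$-disk and $m_+(z,t,\mu)$ lies in the $\mu$-disk, this gives $\sup_{z\in K}\abs{m_+(z,t,\mu_n)-m_+(z,t,\mu)}\lesssim\varepsilon$ in one stroke. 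You replace this with local uniform boundedness, Montel's theorem, and identification of subsequential limits via the nested disks. That is valid, but it buys less than you claim: the paper's direct estimate likewise requires no uniform-in-$n$ decay of the radii --- it only uses decay of $r(\cdot,b,\mu)$ for the single limit measure $\mu$ together with convergence at each fixed $b$, which is exactly what your argument uses too, so the normal-families detour is a matter of taste rather than a genuine simplification. On the other hand, you are usefully more explicit than the paper about the step it asserts in one line, namely that $\gamma$-separation upgrades vague convergence to convergence of positions and weights; note that ruling out extra atoms of $\mu_n$ that vanish in the vague limit implicitly uses that admissible weights $\beta=(\sqrt{b}+1)/(\sqrt{b}-1)$ are bounded below by $1$, which indeed holds for every measure to which an operator $H_\mu$ is attached.
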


\begin{proof}
  Since $\mu_n\to \mu$, we conclude that for $s\in\R$ there exists $(s_n)$ such that $s_n\to s$ and $\mu_n(\set{s_n})\to \mu(\set{s})$. Let $b\notin\bigcup_{n\in\N} \spt\mu_n\cup \spt\mu$ and consider the
  formal solutions of $H_{\mu_n}u = zu$ satisfying Neumann and Dirichlet solutions at $t$. Then these solutions and their derivatives at $b$ converge locally uniformly in $z$ to the corresponding Neumann and Dirichlet solutions of $H_\mu u = zu$.
  Thus, also the circle center's and radii as is in Remark \ref{rem:Weyl-circle} converge locally uniformly in $z$.

  Now, let $\varepsilon>0$ and $K\subseteq \C^+$ be compact. There exists $b>0$ such that
  \[\sup_{z\in K} r(z,b,\mu)<\varepsilon.\]
  There exists $N\in\N$ such that for all $n\geq N$ we have
  \[\sup_{z\in K} \abs{M(z,b,\mu_n) - M(z,b,\mu)}<\varepsilon,\quad \sup_{z\in K} \abs{r(z,b,\mu_n) - r(z,b,\mu)}<\varepsilon.\]
  Thus,
  \[\sup_{z\in K} \abs{m_+(z,t,\mu_n)-m(z,t,\mu)} \leq 3\varepsilon \quad(n\geq N).\]
\end{proof}

\begin{remark}
  Let $\H:=\set{F\from\C^+\to\C^+;\; F\,\text{holomorphic}}$ be the set of Herglotz functions. Let $(F_n)$ be a sequence in $\H$, $F\in\H$.

  (a)
  We say that $F_n\to F$ in \emph{value distribution}, if
  \[\lim_{n\to\infty} \int_A \omega_{F_n(t)}(S)\,dt = \int_A \omega_{F(t)}(S)\,dt\]
  for all Borel sets $A,S\subseteq \R$ with $\lambda(A)<\infty$. Here, for $z=x+iy\in\C^+$ we have
  \[\omega_{z}(S):= \frac{1}{\pi} \int_S \frac{y}{(t-x)^2+y^2}\,dt\]
  and for $G\in\H$ and for a.a.~$t\in\R$ we have
  \[\omega_{G(t)}(S) :=\lim_{y\to0\rlim} \omega_{G(t+iy)}(S).\]
  Note that $G(t):=\lim_{y\to0\rlim} G(t+iy)$ exists for a.a.~$t\in\R$. For these $t$ we can define $\omega_{G(t)}(S)$ directly (which coincides with the definition via the limit).

  (b)
  As shown in \cite[Theorem  2.1]{Remling2011} the convergence $F_n\to F$ in value distribution holds if and only if $F_n\to F$ uniformly on compact subsets of $\C^+$.

  (c)
  Let $F,G\in\H$. Assume that $\omega_{F(t)}(S) = \omega_{G(t)}(S)$ for a.a.~$t\in A$ and for all $S\subseteq \R$. Then $F = G$ a.e.~on $A$, see \cite[paragraph before Theorem 2.1]{Remling2007}.
\end{remark}

\begin{definition}
  Let $C,\gamma>0$, $\mu\in\M_a^{C,\gamma}(\R)$, $\Lambda \subseteq \R$ measurable. Then $\mu$ is called \emph{reflectionless} on $\Lambda$, if
  \begin{equation}
  \label{eq:reflectionless}
  m_+(E+i0,t,\mu) = -\overline{m_-(E+i0,t,\mu)} \quad(\text{a.e.~$E\in\Lambda$})
  \end{equation}
  for some $t\in\R\setminus\spt\mu$.
  Let $\mathcal{R}^{C,\gamma}(\Lambda):= \set{\mu\in\M_a^{C,\gamma}(\R);\; \mu\,\text{reflectionless on}\, \Lambda}$
  be the set of reflectionless (atomic) measures on $\Lambda$ with parameters $C$ and $\gamma$.
\end{definition}

Note that if \eqref{eq:reflectionless} holds for some $t\in \R\setminus\spt\mu$ then it automatically holds for all $t\in \R\setminus\spt\mu$.

\begin{lemma}
\label{lem:refl_value}
  Let $\mu\in \M_a^{C,\gamma}(\R)$, $\mu(\set{0}) = 0$, $\Lambda\subseteq \R$ a Borel set. Then $\mu\in \mathcal{R}^{C,\gamma}(\Lambda)$ if and only if
  \begin{equation}
  \label{eq:refl_equiv}
  \int_B \omega_{m_-(E,0,\mu)}(-S)\,dE = \int_B \omega_{m_+(E,0,\mu))}(S)\, dE
  \end{equation}
  for all Borel sets $B\subseteq \Lambda$, $\lambda(B)<\infty$ and $S\subseteq \R$.
\end{lemma}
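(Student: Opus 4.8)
The plan is to reduce the equivalence to one pointwise identity for the Poisson kernels behind $\omega$ and then to pass between the pointwise (reflectionless) formulation \eqref{eq:reflectionless} and the integrated (value‑distribution) formulation \eqref{eq:refl_equiv} by a standard measure‑theoretic de‑integration together with the injectivity of $w\mapsto\omega_w$. The computational heart is the elementary identity
\[\omega_{-\overline{z}}(S)=\omega_z(-S)\qquad(z\in\C^+,\ S\subseteq\R\ \text{Borel}),\]
which follows by writing out the kernel $\tfrac1\pi\tfrac{\Im z}{(t-\Re z)^2+(\Im z)^2}$ and substituting $t\mapsto -t$; note $-\overline z\in\C^+$ whenever $z\in\C^+$, and the identity persists for real $z$ upon reading $\omega_z=\delta_z$. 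Since $\mu(\set 0)=0$ and $\mu$ is atomic, $0\notin\spt\mu$, so $m_\pm(\cdot,0,\mu)$ are well‑defined Herglotz functions whose boundary values $w_\pm(E):=m_\pm(E+i0,0,\mu)\in\overline{\C^+}$ exist and are finite for a.e.\ $E$; by the remark following the definition it suffices to test the reflectionless condition at $t=0$.

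For the forward implication, assume $\mu\in\mathcal{R}^{C,\gamma}(\Lambda)$, so that $w_+(E)=-\overline{w_-(E)}$ for a.e.\ $E\in\Lambda$. Applying the identity with $z=w_-(E)$ gives
\[\omega_{w_+(E)}(S)=\omega_{-\overline{w_-(E)}}(S)=\omega_{w_-(E)}(-S)\]
for a.e.\ $E\in\Lambda$ and every Borel $S$; integrating this bounded relation over an arbitrary Borel $B\subseteq\Lambda$ with $\lambda(B)<\infty$ yields \eqref{eq:refl_equiv}.

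For the converse I would de‑integrate. Fixing a Borel set $S$, the maps $E\mapsto\omega_{w_\pm(E)}(S)$ are measurable and $[0,1]$‑valued, so equality of the two integrals in \eqref{eq:refl_equiv} over every finite‑measure $B\subseteq\Lambda$ forces $\omega_{w_+(E)}(S)=\omega_{w_-(E)}(-S)$ for a.e.\ $E\in\Lambda$, with an exceptional null set depending a priori on $S$. To remove that dependence, let $S$ range over the countable family of intervals with rational endpoints and take the union of the corresponding null sets; off the resulting single null set $N\subseteq\Lambda$ the probability measures $\omega_{w_+(E)}$ and $S\mapsto\omega_{w_-(E)}(-S)$ agree on a generating $\pi$‑system and have equal total mass, hence coincide on all Borel $S$. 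Using the identity once more rewrites this as $\omega_{w_+(E)}(S)=\omega_{-\overline{w_-(E)}}(S)$ for all $S$ and all $E\in\Lambda\setminus N$, and the injectivity of $w\mapsto\omega_w$ on $\overline{\C^+}$ (which underlies part~(c) of the remark above) forces $w_+(E)=-\overline{w_-(E)}$, i.e.\ \eqref{eq:reflectionless} holds a.e.\ on $\Lambda$ and $\mu\in\mathcal{R}^{C,\gamma}(\Lambda)$.

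I expect the main obstacle to be the converse direction: making the exceptional null set independent of $S$, and justifying the pointwise injectivity of $w\mapsto\omega_w$ at those $E$ where a boundary value happens to be real, so that $\omega_w$ degenerates to a point mass. The first is handled by the countable‑generation/uniqueness‑of‑measures argument above, the second by separating the purely real case (Dirac masses, whose location pins down $w$) from the interior case (the Poisson density, whose peak and width pin down $\Re w$ and $\Im w$).
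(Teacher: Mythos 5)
Your proof is correct and follows essentially the same route as the paper's: both directions hinge on the identity $\omega_{-\overline{z}}(S)=\omega_{z}(-S)$, with the forward implication obtained by integrating the pointwise reflectionless relation and the converse by de-integrating (the paper invokes Lebesgue differentiation where you argue via agreement of integrals over all finite-measure sets) and then appealing to the injectivity of $w\mapsto\omega_w$ as in part (c) of the preceding remark. Your extra care with the $S$-dependent null set (rational intervals, $\pi$-system uniqueness) and with real boundary values makes explicit two points the paper's terser proof passes over silently.
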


\begin{proof}
Assume that $\mu\in\mathcal{R}^{C,\gamma}(\Lambda)$. Then $\Im m_\pm(E,0,\mu)>0$ for a.a.~$E\in \Lambda$. Since $\mu$ is reflectionless on $\Lambda$, we have
\[m_+(E,0,\mu) = -\overline{m_-(E,0,\mu)}\]
for a.a.~$E\in \Lambda$. Since $\omega_{z}(-S) = \omega_{-\overline{z}}(S)$ for all $z\in\C^+$, we obtain
\[\int_B \omega_{m_-(E,0,\mu)}(-S)\,dE = \int_B \omega_{m_+(E,0,\mu)}(S)\, dE.\]

On the other hand, assume \eqref{eq:refl_equiv}. Lebesgue's differentiating theorem yields
\[\omega_{m_-(E,0,\mu)(-S)} = \omega_{m_+(E,0,\mu)}(S) \quad(\text{a.e.~$E\in\Lambda$}).\]
Since $\omega_{z}(-S) = \omega_{-\overline{z}}(S)$ for all $z\in\C^+$ we obtain
\[\omega_{-\overline{m_-(E,0,\mu)}}(S) = \omega_{m_+(E,0,\mu)}(S) \quad(\text{a.e.~$E\in \Lambda$}).\]
Thus, $m_+(E,0,\mu) = -\overline{m_-(E,0,\mu)}$ for a.a.~$E\in \Lambda$.
\end{proof}

Next, we again consider measures on halflines:
\begin{align*}
  \M_+^C:=\set{\1_{(0,\infty)}\mu;\; \mu\in\M^C(\R)},\\
  \M_-^C:=\set{\1_{(-\infty,0)}\mu;\; \mu\in\M^C(\R)}.
\end{align*}
Note that $\M_+^C$ can be identified with $\M^C((0,\infty))$, and we use the topology and metric $d_+$ from this space.
Similar identifications apply to $\M_-^C$.
Then the restriction maps $\M^C(\R)\to \M_\pm^C$ are continuous and thus $(\M_\pm^C,d_\pm)$ are compact. Furthermore, these restriction maps are injective.
Let
\begin{align*}
  \mathcal{R}_+^{C,\gamma}(\Lambda) & := \set{\1_{(0,\infty)}\mu;\; \mu\in \mathcal{R}^{C,\gamma}(\Lambda)}\subseteq \M_+^C,\\
  \mathcal{R}_-^{C,\gamma}(\Lambda) & := \set{\1_{(-\infty,0)}\mu;\; \mu\in \mathcal{R}^{C,\gamma}(\Lambda)}\subseteq \M_-^C,
\end{align*}
equipped with the metrics $d_\pm$.

We can now prove the analogon of \cite[Proposition 2]{Remling2007} (which was proven for the Schr\"odinger case) in our setting.

\begin{proposition}
\label{prop:compact}
  Let $\Lambda\subseteq \R$ be measurable and $C,\gamma>0$. Then
  $(\mathcal{R}^{C,\gamma}(\Lambda),d)$ and $(\mathcal{R}_\pm^{C,\gamma}(\Lambda),d_\pm)$ are compact and
  the restriction maps $\mathcal{R}^{C,\gamma}(\Lambda)\to\mathcal{R}_\pm^{C,\gamma}(\Lambda)$ are homeomorphisms.
\end{proposition}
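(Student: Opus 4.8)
The plan is to realize $\mathcal{R}^{C,\gamma}(\Lambda)$ as a closed subset of the compact metric space $(\M^C(\R),d)$, to deduce compactness of the one-sided sets as continuous images, and finally to identify the restriction maps as continuous bijections from a compact space onto a metric (hence Hausdorff) space, which are automatically homeomorphisms. In this way the only genuinely new point beyond soft topology is the \emph{injectivity} of the restriction maps, and this is exactly where the reflectionless hypothesis is used. The argument parallels \cite[Proposition 2]{Remling2007}.

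First I would show that $\mathcal{R}^{C,\gamma}(\Lambda)$ is sequentially closed in $\M^C(\R)$. So let $(\mu_n)$ in $\mathcal{R}^{C,\gamma}(\Lambda)$ with $\mu_n\to\mu$ vaguely. That $\mu$ is nonnegative and satisfies $\norm{\mu}_\loc\leq C$ is immediate. For the atomic structure one notes that on any interval shorter than $\gamma$ each $\mu_n$ charges at most one point; since the masses lie in the fixed band $(1,C]$, passing to the vague limit on a locally finite cover of $\R$ by such intervals shows that atoms converge to atoms without splitting or vanishing, so $\mu$ is again atomic with $\gamma$-separated support, i.e. $\mu\in\M_a^{C,\gamma}(\R)$. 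In particular the $m$-functions $m_\pm(\cdot,t,\mu)$ are defined, and by Lemma \ref{lem:conv_compacts} (whose proof does not use the support restriction) we have $m_\pm(\cdot,t,\mu_n)\to m_\pm(\cdot,t,\mu)$ uniformly on compact subsets of $\C^+$, where $t$ is chosen off $\spt\mu\cup\bigcup_n\spt\mu_n$ (such $t$ exists, as this set accumulates only near $\spt\mu$).

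Next I would transfer the reflectionless property to the limit. Recalling that uniform convergence on compact subsets of $\C^+$ is the same as convergence in value distribution (\cite[Theorem 2.1]{Remling2011}), we get $\int_B\omega_{m_\pm(E,t,\mu_n)}(S)\,dE\to\int_B\omega_{m_\pm(E,t,\mu)}(S)\,dE$ for all Borel $B$ with $\lambda(B)<\infty$ and all Borel $S$. Applying Lemma \ref{lem:refl_value} (at the base point $t$, where it holds verbatim) to each reflectionless $\mu_n$ yields $\int_B\omega_{m_-(E,t,\mu_n)}(-S)\,dE=\int_B\omega_{m_+(E,t,\mu_n)}(S)\,dE$ for $B\subseteq\Lambda$; letting $n\to\infty$ (using value-distribution convergence with $-S$ on the left) gives the identical relation for $\mu$, so Lemma \ref{lem:refl_value} shows that $\mu$ is reflectionless on $\Lambda$ and hence $\mu\in\mathcal{R}^{C,\gamma}(\Lambda)$. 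Thus $\mathcal{R}^{C,\gamma}(\Lambda)$ is closed, hence compact. Since the restriction maps $\M^C(\R)\to\M_\pm^C$ are continuous, the sets $\mathcal{R}_\pm^{C,\gamma}(\Lambda)$ are, by their very definition, the continuous images of the compact set $\mathcal{R}^{C,\gamma}(\Lambda)$, and are therefore compact and met onto.

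It remains to prove that the restriction map $\mu\mapsto\1_{(0,\infty)}\mu$ is injective on $\mathcal{R}^{C,\gamma}(\Lambda)$ (the case of $\1_{(-\infty,0)}$ being symmetric); continuity of the inverse is then automatic, as a continuous bijection from a compact space to a metric space is a homeomorphism. So suppose $\mu,\nu\in\mathcal{R}^{C,\gamma}(\Lambda)$ with $\1_{(0,\infty)}\mu=\1_{(0,\infty)}\nu$, and fix $t>0$ with $t\notin\spt\mu\cup\spt\nu$. Because $m_+(z,t,\cdot)$ depends only on the restriction of the measure to $(t,\infty)\subseteq(0,\infty)$, we have $m_+(z,t,\mu)=m_+(z,t,\nu)$ on $\C^+$. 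Both measures being reflectionless on $\Lambda$, the relation $m_-(E+i0,t,\cdot)=-\overline{m_+(E+i0,t,\cdot)}$ holds for a.e.\ $E\in\Lambda$, whence $m_-(E+i0,t,\mu)=m_-(E+i0,t,\nu)$ a.e.\ on $\Lambda$. The two Herglotz functions $m_-(\cdot,t,\mu)$ and $m_-(\cdot,t,\nu)$ thus agree on the boundary set $\Lambda$ of positive Lebesgue measure, so they coincide on all of $\C^+$; since a half-line $m$-function determines the measure there, we get $\mu|_{(-\infty,t)}=\nu|_{(-\infty,t)}$, and together with $\1_{(0,\infty)}\mu=\1_{(0,\infty)}\nu$ and $t>0$ this forces $\mu=\nu$. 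The main obstacle is precisely this injectivity step: it is the only place where reflectionlessness is genuinely exploited, and it rests on two analytic inputs — the boundary uniqueness of Herglotz functions agreeing on a set of positive measure (so that $\lambda(\Lambda)>0$ is what makes the map injective) and the determination of the measure by its $m$-function. By comparison, the closedness argument is essentially bookkeeping once Lemmas \ref{lem:conv_compacts} and \ref{lem:refl_value} and the value-distribution characterization are in hand.
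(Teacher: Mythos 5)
Your proof is correct, and for the compactness part it is essentially the paper's proof: closedness of $\mathcal{R}^{C,\gamma}(\Lambda)$ in the compact space $\M^C(\R)$ via Lemma \ref{lem:conv_compacts}, the equivalence of locally uniform convergence with convergence in value distribution, and the two directions of Lemma \ref{lem:refl_value}; then $\mathcal{R}_\pm^{C,\gamma}(\Lambda)$ are compact as continuous images. Where you genuinely diverge is the homeomorphism claim. The paper's proof disposes of it in one line (``the restriction maps are bijective and continuous between compact metric spaces''), leaning on the earlier unproved assertion that ``these restriction maps are injective'' --- an assertion which, read literally as a statement about $\M^C(\R)\to\M^C_\pm$, is false, and which for the reflectionless classes is exactly the nontrivial point. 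You actually prove it, by Remling's argument for \cite[Proposition 2]{Remling2007}: equality of the right restrictions gives $m_+(\cdot,t,\mu)=m_+(\cdot,t,\nu)$, the reflectionless relation transfers this to $m_-$ a.e.\ on $\Lambda$, boundary uniqueness for Herglotz functions agreeing on a set of positive measure gives $m_-(\cdot,t,\mu)=m_-(\cdot,t,\nu)$ on all of $\C^+$, and a Borg--Marchenko-type uniqueness result recovers the measure on the left. So your write-up supplies a step the paper omits; this is a real improvement, not just bookkeeping. Three caveats. First, as you yourself note, injectivity requires $\lambda(\Lambda)>0$, which the proposition as stated does not assume; for $\lambda(\Lambda)=0$ one has $\mathcal{R}^{C,\gamma}(\Lambda)=\M_a^{C,\gamma}(\R)$ and the restriction maps are certainly not injective, so the positive-measure hypothesis (present in Remling's original, and satisfied in the only application, Theorem \ref{thm:oracle_thm}) must be read into the statement. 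Second, the input ``the half-line $m$-function determines the measure'' is a nontrivial inverse-spectral fact in this measure setting and should be backed by a citation; note, however, that any proof of injectivity, including the paper's implicit one, needs it. Third, a minor slip in your closedness argument: atoms of measures in $\M_a^{C,\gamma}(\R)$ need not have mass exceeding $1$ (that lower bound holds only for the specific measures $\sum\beta_n\delta_{t_n}$ built from branching numbers), so atom masses can shrink and atoms can disappear in the vague limit; the conclusion that the limit is again atomic with $\gamma$-separated support nevertheless stands, and is a point the paper silently assumes.
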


\begin{proof}
  It suffices to show that $\mathcal{R}^{C,\gamma}(\Lambda)$ is closed. Let $(\mu_n)$ in $\mathcal{R}^{C,\gamma}(\Lambda)$, $\mu\in\M^C(\R)$ an atomic measure, $\mu_n\to \mu$.
  Then Lemma \ref{lem:conv_compacts} yields $m_\pm(\cdot,0,\mu_n)\to m_\pm(\cdot,0,\mu)$ uniformly on compact subsets of $\C^+$.
  Thus, also $m_\pm(\cdot,0,\mu_n)\to m_\pm(\cdot,0,\mu)$ in value distribution. By Lemma \ref{lem:refl_value}, for $n\in\N$ we have
  \[\int_B \omega_{m_-(E,0,\mu_n)}(-S)\,dE = \int_B \omega_{m_+(E,0,\mu_n)}(S)\, dE\]
  for all Borel sets $B\subseteq \Lambda$, $\lambda(B)<\infty$ and $S\subseteq \R$.
  Taking the limit $n\to\infty$ yields
  \[\int_B \omega_{m_-(E,0,\mu)}(-S)\,dE = \int_B \omega_{m_+(E,0,\mu)}(S)\, dE\]
  for all Borel sets $B\subseteq \Lambda$, $\lambda(B)<\infty$ and $S\subseteq \R$.
  Again applying Lemma \ref{lem:refl_value} we obtain $\mu\in \mathcal{R}^{C,\gamma}(\Lambda)$.

  Since the restriction maps are continuous, $(\mathcal{R}_\pm^{C,\gamma}(\Lambda),d_\pm)$ are compact as continuous images of a compact space.
  Since the restriction maps are bijective and continuous between these compact metric spaces, their inverses are continuous as well.
\end{proof}

For $\mu\in\M^{C}(\R)$ we write $S_x\mu:=\mu(\cdot+x)$ for the translate by $x$. Then we can define the $\omega$ limit set of $\mu$ as
\[\omega(\mu):=\set{\nu\in\M^{C}(\R);\; \text{there exists $(x_n)$ in $\R$}: x_n\to \infty,\, d(S_{x_n}\mu,\nu)\to 0}.\]
Note that if $\mu\in \M_a^{C,\gamma}(\R)$, then $\omega(\mu) \subseteq\M_a^{C,\gamma}(\R)$.

The following result was proven in \cite[Theorem 16]{BreuerFrank2009}.

\begin{proposition}[{\cite[Theorem 16]{BreuerFrank2009}}]
\label{prop:omega_refl}
  Let $C,\gamma>0$, $\mu\in\M_a^{C,\gamma,+}(\R)$. Then $\omega(\mu)\subseteq \mathcal{R}^{C,\gamma}(\Sigma_{ac}(H_\mu^+))$.
\end{proposition}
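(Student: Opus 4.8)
The plan is to follow Remling's strategy for the Oracle Theorem and feed it into the value-distribution machinery assembled above. Fix $\nu\in\omega(\mu)$ and choose $x_n\to\infty$ with $d(S_{x_n}\mu,\nu)\to0$; as observed after the definition of $\omega(\mu)$ we have $\nu\in\M_a^{C,\gamma}(\R)$. We may assume $x_n\notin\spt\mu$ and $0\notin\spt\nu$ (perturb the $x_n$ by shifts tending to $0$, which leaves the vague limit unchanged, and, using the base-point independence of \eqref{eq:reflectionless}, work at a base point off $\spt\nu$; neither affects the conclusion), so that all $m$-functions below may be evaluated at $t=0$. Since $\nu(\set{0})=0$, Lemma~\ref{lem:refl_value} shows that it suffices to verify \eqref{eq:refl_equiv} with $\Lambda=\Sigma_{ac}(H_\mu^+)$, i.e.\ $\nu\in\mathcal{R}^{C,\gamma}(\Sigma_{ac}(H_\mu^+))$.

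First I would use translation invariance, $m_\pm(\cdot,0,S_{x_n}\mu)=m_\pm(\cdot,x_n,\mu)$, together with Lemma~\ref{lem:conv_compacts}: since $S_{x_n}\mu\to\nu$ vaguely, $m_\pm(\cdot,x_n,\mu)\to m_\pm(\cdot,0,\nu)$ uniformly on compact subsets of $\C^+$, hence, by the equivalence of locally uniform convergence and convergence in value distribution (stated in the remark above), also in value distribution. Combined with the identity $\omega_{-\overline z}(S)=\omega_z(-S)$ this gives
\[
\int_A\omega_{m_+(E,x_n,\mu)}(S)\,dE\to\int_A\omega_{m_+(E,0,\nu)}(S)\,dE
\]
and
\[
\int_A\omega_{-\overline{m_-(E,x_n,\mu)}}(S)\,dE=\int_A\omega_{m_-(E,x_n,\mu)}(-S)\,dE\to\int_A\omega_{m_-(E,0,\nu)}(-S)\,dE
\]
for every Borel set $A\subseteq\R$ with $\lambda(A)<\infty$ and every Borel set $S\subseteq\R$.

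The decisive ingredient is a Breimesser--Pearson type invariance of the value distribution along the half-line: for every Borel set $A\subseteq\Sigma_{ac}(H_\mu^+)$ with $\lambda(A)<\infty$ and every Borel set $S\subseteq\R$ one has
\[
\lim_{n\to\infty}\int_A\omega_{m_+(E,x_n,\mu)}(S)\,dE=\lim_{n\to\infty}\int_A\omega_{-\overline{m_-(E,x_n,\mu)}}(S)\,dE.
\]
Granting this, the two limits computed in the previous step must coincide on any $A\subseteq\Sigma_{ac}(H_\mu^+)$, whence $\int_A\omega_{m_+(E,0,\nu)}(S)\,dE=\int_A\omega_{m_-(E,0,\nu)}(-S)\,dE$ for all admissible $A$ and $S$. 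This is exactly \eqref{eq:refl_equiv} with $\Lambda=\Sigma_{ac}(H_\mu^+)$, and Lemma~\ref{lem:refl_value} then yields $\nu\in\mathcal{R}^{C,\gamma}(\Sigma_{ac}(H_\mu^+))$.

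The main obstacle is establishing the Breimesser--Pearson invariance for the atomic-measure operators $H_\mu$. In the Schr\"odinger and Jacobi settings it rests on analysing, on the a.c.\ spectrum, the asymptotics of the Weyl-disk data of Remark~\ref{rem:Weyl-circle}: the characterization of $\Sigma_{ac}(H_\mu^+)$ through $0<\Im m_+(E+i0,0,\mu)<\infty$ forces the ``reflection'' carried by the pair $(m_+(\cdot,x_n,\mu),m_-(\cdot,x_n,\mu))$ to wash out, in the value-distribution sense, as $x_n\to\infty$. Transporting the argument requires re-deriving the transfer-matrix and Weyl-disk estimates uniformly over the compact family $\M_a^{C,\gamma}(\R)$ and checking that the underlying Herglotz identities survive the passage to atomic measures; since $H_\mu$ is free on each edge with only the explicit $\sqrt{b_n}$-matching at the atoms, the transfer matrices are completely explicit and these estimates should go through essentially as in the classical case -- this is precisely the content imported from \cite[Theorem 16]{BreuerFrank2009}.
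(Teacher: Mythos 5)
The first thing to note is that the paper does not prove this proposition at all: it is imported verbatim, as a citation, from \cite[Theorem 16]{BreuerFrank2009}, so there is no in-paper argument to compare yours against. Your sketch correctly reconstructs the strategy that lies behind that citation: pass from $\nu\in\omega(\mu)$ to the translates $S_{x_n}\mu$, use Lemma~\ref{lem:conv_compacts} together with the equivalence of locally uniform convergence and convergence in value distribution to identify the limits of the value distributions of $m_\pm(\cdot,x_n,\mu)$ with those of $m_\pm(\cdot,0,\nu)$, and then conclude reflectionlessness via Lemma~\ref{lem:refl_value}. These soft reductions are wired together correctly (the normalizations $x_n\notin\spt\mu$, $0\notin\spt\nu$ and the base-point independence of \eqref{eq:reflectionless} are handled legitimately; note only that Lemma~\ref{lem:conv_compacts} is stated for sequences in $\M_a^{C,\gamma,\rlim}(\R)$ while your translates $S_{x_n}\mu$ are two-sided, a liberty the paper itself also takes in the proof of Proposition~\ref{prop:compact}).

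However, as a standalone proof your attempt has a genuine gap exactly where you flag it: the Breimesser--Pearson type invariance
\[
\lim_{n\to\infty}\Bigl(\int_A\omega_{m_+(E,x_n,\mu)}(S)\,dE-\int_A\omega_{-\overline{m_-(E,x_n,\mu)}}(S)\,dE\Bigr)=0
\qquad\bigl(A\subseteq\Sigma_{ac}(H_\mu^+),\ \lambda(A)<\infty\bigr)
\]
is the entire analytic content of the proposition; everything else in your argument is bookkeeping. Asserting that the transfer-matrix and Weyl-disk estimates ``should go through essentially as in the classical case'' is not a proof, and your attribution of this step to \cite[Theorem 16]{BreuerFrank2009} is circular: within the framework of this paper, Theorem 16 of Breuer--Frank \emph{is} (essentially) the statement $\omega(\mu)\subseteq\mathcal{R}^{C,\gamma}(\Sigma_{ac}(H_\mu^+))$ that you are trying to prove, not the Breimesser--Pearson step, which has to be established separately for these measure-perturbed operators. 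So either you cite Breuer--Frank for the proposition itself --- which is exactly what the paper does, making the rest of your sketch redundant --- or you must actually carry out the Breimesser--Pearson analysis for operators with the $\sqrt{b_n}$-matching conditions, which you have not done.
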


We can now prove a version of Remling's Oracle Theorem (\cite[Theorem 2]{Remling2007}) for our setting.

\begin{theorem}
\label{thm:oracle_thm}
 Let $\Lambda\subseteq \R$ be a Borel set of positive Lebesgue measure, $\varepsilon>0$, $a,b,\in\R$, $a<b$, $C>0$. Then there exist $L>0$ and a continuous function
\[\triangle\from \M^C(-L,0) \to \M^C(a,b)\]
such that $\triangle$ maps atomic measures to atomic measures
so that the following holds. If there is $\gamma>0$ and $\mu\in \M_a^{C,\gamma,\rlim}(\R)$ such that $\Sigma_{ac}(H_\mu^{\rlim})\supseteq \Lambda$, then there exists $x_0>0$ so that for all $x\geq x_0$ we have
\[d(\triangle(\1_{(-L,0)}S_{x}\mu), \1_{(a,b)}S_{x}\mu) <\varepsilon.\]
\end{theorem}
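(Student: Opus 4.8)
The plan is to adapt Remling's construction, whose engine is Proposition~\ref{prop:compact}: on reflectionless measures the restriction to the left half-line is a \emph{homeomorphism}, so a reflectionless measure is recovered from its past, and in particular its restriction to $(a,b)$ is a continuous function of its restriction to $(-\infty,0)$. The oracle $\triangle$ will be this ``past~$\mapsto$~piece on $(a,b)$'' map, truncated to the finite window $(-L,0)$ and extended continuously to all of $\M^C(-L,0)$. To build the prediction map, fix $\gamma>0$, write $P_-$ for the restriction $\M^C(\R)\to\M_-^C$ and $P_{(a,b)}$ for the restriction to $(a,b)$. By Proposition~\ref{prop:compact} the map $P_-\from\mathcal{R}^{C,\gamma}(\Lambda)\to\mathcal{R}_-^{C,\gamma}(\Lambda)$ is a homeomorphism, so $\Phi_\gamma:=P_{(a,b)}\circ(P_-)^{-1}\from\mathcal{R}_-^{C,\gamma}(\Lambda)\to\M^C(a,b)$ is continuous and sends the left half of a reflectionless measure to its part on $(a,b)$. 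Since $\mathcal{R}^{C,\gamma}(\Lambda)$ grows as $\gamma$ decreases, the maps $\Phi_\gamma$ are restrictions of one another and glue to a single map $\Phi$ on $\bigcup_{\gamma>0}\mathcal{R}_-^{C,\gamma}(\Lambda)$.

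The decisive quantitative step is a \emph{finite-window lemma}: for every $\varepsilon>0$ there is $L>0$ such that whenever $\nu,\tilde\nu$ are reflectionless on $\Lambda$ and $\1_{(-L,0)}\nu=\1_{(-L,0)}\tilde\nu$, then $d(\1_{(a,b)}\nu,\1_{(a,b)}\tilde\nu)<\varepsilon$. I would prove this by contradiction and compactness of $\M^C(\R)$: if it failed for some $\varepsilon$, choose for each $L\in\N$ witnesses $\nu_L,\tilde\nu_L$, pass to vaguely convergent subsequences $\nu_L\to\nu_\infty$, $\tilde\nu_L\to\tilde\nu_\infty$, and note that the agreement on $(-L,0)$ forces $\1_{(-\infty,0)}\nu_\infty=\1_{(-\infty,0)}\tilde\nu_\infty$ while $d(\1_{(a,b)}\nu_\infty,\1_{(a,b)}\tilde\nu_\infty)\geq\varepsilon$. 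If the spacing parameters stay bounded below, the limits lie in a common $\mathcal{R}^{C,\gamma}(\Lambda)$ and the homeomorphism of Proposition~\ref{prop:compact} yields $\nu_\infty=\tilde\nu_\infty$, a contradiction.

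With the lemma in hand I would define the oracle. On the compact set $K_L\subseteq\M^C(-L,0)$ of left windows of reflectionless measures, the lemma says that the assignment ``window~$\mapsto\Phi$ of a reflectionless measure with that window'' is single-valued up to an error $<\varepsilon$; since $\M^C(a,b)$ is a compact convex subset of the signed measures, averaging nearby values through a partition of unity produces a genuine continuous $\triangle_0\from K_L\to\M^C(a,b)$ with $d(\triangle_0(\1_{(-L,0)}\nu_-),\Phi(\nu_-))<2\varepsilon$. I would extend $\triangle_0$ to all of $\M^C(-L,0)$ by the Tietze--Dugundji theorem; because the extension takes values in convex combinations of the atomic measures $\triangle_0(K_L)$, and convex combinations of atomic measures are atomic, the resulting $\triangle$ automatically maps atomic measures to atomic measures. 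To verify the prediction property, take $\mu\in\M_a^{C,\gamma,\rlim}(\R)$ with $\Sigma_{ac}(H_\mu^{\rlim})\supseteq\Lambda$; Proposition~\ref{prop:omega_refl} and the monotonicity of reflectionlessness in $\Lambda$ give $\omega(\mu)\subseteq\mathcal{R}^{C,\gamma}(\Lambda)$. If the conclusion failed there would be $x_n\to\infty$ with prediction error $\geq3\varepsilon$; extracting a vague limit $S_{x_n}\mu\to\nu\in\omega(\mu)$ and using continuity of $\triangle$ and of the restriction maps, the error at $\nu$ would still be $\geq3\varepsilon$, whereas reflectionlessness of $\nu$ gives that $\triangle(\1_{(-L,0)}\nu)$ lies within $2\varepsilon$ of $\Phi(\1_{(-\infty,0)}\nu)=\1_{(a,b)}\nu$, a contradiction; hence the desired $x_0$ exists.

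The main obstacle is the \emph{uniformity in the spacing parameter} $\gamma$: the oracle $\triangle$ must be fixed before $\mu$, hence before $\gamma$, is known, so the finite-window lemma is needed for all $\gamma$ simultaneously, and in its compactness proof the case of spacings shrinking to $0$ is not covered by Proposition~\ref{prop:compact} as stated. I expect this to be handled using the lower bound $\beta_n=\frac{\sqrt{b_n}+1}{\sqrt{b_n}-1}>1$ on the atomic weights: together with $\norm{\cdot}_\loc\leq C$ it forces at most $C$ atoms per unit interval, so vague limits remain atomic and the determination of a reflectionless measure by its past persists in the limit, closing the $\gamma\downarrow0$ gap. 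The remaining points---passing the strict inequality on $(a,b)$ to vague limits and the soft construction of a continuous selection---are routine given the compactness and convexity already available.
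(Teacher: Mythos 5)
Your construction is, in all essentials, the paper's own proof (which in turn follows Remling): use Proposition~\ref{prop:compact} to conclude that the past determines a reflectionless measure continuously, hence uniformly continuously on the compact set $\mathcal{R}^{C,\gamma}(\Lambda)$; discretize by a finite net; define the oracle as a partition-of-unity average of the predictions at the net points; extend by the Dugundji--Borsuk theorem; note that convex combinations preserve atomicity; and verify the prediction property via Proposition~\ref{prop:omega_refl} and the attraction of $S_x\mu$ to $\omega(\mu)$. Your ``finite-window lemma'' is exactly the paper's step (ii) (uniform continuity of $\mathcal{R}_-^{C,\gamma}(\Lambda)\to\mathcal{R}_+^{C,\gamma}(\Lambda)$ read through the definition of the vague topology), and your closing contradiction argument is an equivalent packaging of the paper's direct estimate. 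So for \emph{fixed} $\gamma$ your proof is sound and is the paper's proof.

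The one place where you go beyond the paper---the attempt to make $\triangle$ uniform in $\gamma$---is also the one place where your argument fails. You are right that the statement's quantifiers put $\triangle$ before $\gamma$ (and the paper's proof quietly fixes $\gamma$ in its step (ii) without comment), but the proposed repair via the weight lower bound does not work. The reason is that vague convergence of the measures ceases to control the operators once inter-atom spacings shrink to $0$: vaguely, masses \emph{add}, $\beta_1\delta_t+\beta_2\delta_{t+\delta}\to(\beta_1+\beta_2)\delta_t$ as $\delta\to 0$, whereas the vertex conditions \emph{compose}, i.e.\ the transfer matrices $\operatorname{diag}(\sqrt{b_n},1/\sqrt{b_n})$ multiply, so the solutions converge to those of a single vertex with jump factor $\sqrt{b_1b_2}$, whose associated weight is $\frac{\sqrt{b_1b_2}+1}{\sqrt{b_1b_2}-1}$ and not $\beta_1+\beta_2$ (for $\beta_1=\beta_2=3$, i.e.\ $b_1=b_2=4$, the former is $5/3$ while the latter is $6$). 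Consequently Lemma~\ref{lem:conv_compacts} genuinely requires a fixed $\gamma$, the $m$-functions of a vague limit need not be limits of $m$-functions when atoms merge, and reflectionlessness need not pass to such limits; so ``the determination of a reflectionless measure by its past'' does \emph{not} persist as $\gamma\downarrow 0$, and your finite-window lemma uniform in $\gamma$ remains unproved (and is doubtful). The honest conclusion is that the oracle depends on $(C,\gamma)$; that is what the paper's proof actually establishes, and it is all that is needed in the proof of Theorem~\ref{thm:nonempty_eventually_periodic}, where $\gamma$ is determined by the finitely many edge lengths before the Oracle Theorem is invoked.
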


\begin{proof}
  We follow the proof of \cite[Theorem 2]{Remling2007}, but replace the application of Theorem 3 in there (in step 4) by Proposition \ref{prop:omega_refl}.

  (i)
  By compactness it suffices to prove the statement for \emph{some} metric $d$ that generates the topology of vague convergence.

  Let $J_-:=(-L,0)$ and $J_+:=(a,b)$. Let $d_\pm$ be the metric on $M^C(J_\pm)$.
  For $\mu\in\M^C(\R)$ let $\mu_\pm$ be the restriction of $\mu$ to $J_\pm$.

  Fix a metric $d$ such that $d$ dominates $d_\pm$: if $\mu,\nu\in \M^C(\R)$ then
  \[d_-(\mu_-,\nu_-)\leq d(\mu,\nu),\quad d_+(\mu_+,\nu_+)\leq d(\mu,\nu).\]

  (ii)
  By Proposition \ref{prop:compact} the mapping $\mathcal{R}_-^{C,\gamma}(\Lambda)\to \mathcal{R}_+^{C,\gamma}(\Lambda)$ is uniformly continuous.
  Hence, by the definition of the topologies
  we may find $L>0$ and $0<\delta<\varepsilon<1$ such that if $\nu,\tilde{\nu}\in \mathcal{R}^{C,\gamma}(\Lambda)$, then
  \[d_-(\nu_-,\tilde{\nu}_-)< 5\delta \Longrightarrow d_+(\nu_+,\tilde{\nu}_+) < \varepsilon^2.\]

  (iii)
  The set
  \[\mathcal{R}_{J_-}^{C,\gamma}(\Lambda) := \set{\mu_-;\; \mu\in \mathcal{R}^{C,\gamma}(\Lambda)}\]
  is compact by Proposition \ref{prop:compact}.
  Since $\M^{C,\gamma}(J_-)$ is compact, the closed $\delta$-neighborhood
  \[\overline{U}_\delta = \set{\mu_-\in \M^{C,\gamma}(J_-);\; \exists\,\nu\in\mathcal{R}^{C,\gamma}(\Lambda): d_-(\mu_-,\nu_-)\leq \delta}\]
  is compact, too.
  Hence, there exist $\mathcal{F}\subseteq \mathcal{R}^{C,\gamma}(\Lambda)$ finite so that the balls of radius $2\delta$ around $\mathcal{F}$ cover $\overline{U}_\delta$.
  Define $\triangle(\nu_{-}):= \nu_{+}$ for $\nu\in\mathcal{F}$.

  (iv)
  For $\sigma\in \overline{U}_\delta$ define
  \[\triangle(\sigma):=\frac{\sum_{\nu\in\mathcal{F}} (3\delta - d_-(\sigma,\nu_-))^+\triangle (\nu_-)}{\sum_{\nu\in\mathcal{F}} (3\delta - d_-(\sigma,\nu_-))^+}.\]
  Then $\triangle(\sigma)$ is atomic for $\sigma\in\overline{U}_\delta$ and $\triangle\from \overline{U}_\delta\to \M^{C}(J_+)$ is continuous.
  Moreover, for all $\tilde{\nu}\in\mathcal{F}$ with $d_-(\sigma,\tilde{\nu}_-)<2\delta$ we have $d_-(\nu_-,\tilde{\nu}_-)<5\delta$ for all $\nu\in\mathcal{F}$ contributing to the sum.
  Thus, (ii) implies $d_+(\nu_+,\tilde{\nu}_+)<\varepsilon^2$ for these $\nu$ and by \cite[Lemma 2]{Remling2007} we obtain
  \[d_+(\triangle(\sigma),\tilde{\nu}_+)<\varepsilon\]
  for sufficiently small $\varepsilon$. Note that for every $\sigma\in \overline{U}_\delta$ there exists $\tilde{\nu}\in\mathcal{F}$ such that $d_-(\sigma,\tilde{\nu}_-)<2\delta$.

  So, $\triangle$ is defined on $\overline{U}_\delta$ and maps every $\sigma\in \overline{U}_\delta$ to some nonegative atomic measure $\triangle(\sigma)$. The extension theorem of Dugundji and Borsuk \cite[Chapter II, Theorem 3.1]{BessagaPelczynski1975} yields a continuous extension of $\triangle$ to $\M^C(J_-)$.
  Since this extension is obtained by convex combinations, $\triangle$ maps atomic measures to atomic measures.

  (v)
  Now, choose $\mu\in\M_a^{C,\gamma,+}(\R)$ with $\Sigma_{ac}(H_\mu^+)\supseteq \Lambda$. Then there exists $x_0>0$ such that
  \[d(S_{x}\mu,\omega(\mu))<\delta \quad(x\geq x_0),\]
  i.e., for fixed $x\geq x_0$ there exists $\nu\in\omega(\mu)$ such that $d(S_{x}\mu,\nu)<\delta$. By Proposition \ref{prop:omega_refl} we have
  $\nu\in \mathcal{R}^{C,\gamma}(\Lambda)$. We thus obtain
  \[d_\pm((S_{x}\mu)_\pm,\nu_\pm)<\delta.\]
  Hence, $(S_{x}\mu)_-\in \overline{U}_\delta$, so there exists $\tilde{\nu}\in\mathcal{F}$ such that
  \[d_\pm((S_{x}\mu)_-,\tilde{\nu}_-)<2\delta.\]
  By (iv) we obtain
  \[d_+(\triangle((S_{x}\mu)_-),\tilde{\nu}_+)<\varepsilon.\]
  Since also $d_-(\nu_-,\tilde{\nu}_-)<3\delta$, (ii) implies $d_+(\nu_+,\tilde{\nu}_+)<\varepsilon^2$. Therefore, we finally conclude
  \[d_-(\triangle((S_{x}\mu)_-),(S_{x}\mu)_+) < \delta + \varepsilon + \varepsilon^2 < 3\varepsilon.\]
\end{proof}

\section{Finite local complexity}

Let us recall some definitions from \cite{KlassertLenzStollmann2011}.

\begin{definition}
  A \emph{piece} is a pair $(\nu,I)$ consisting of a left-closed right-open interval $I\subseteq\R$ with positive length
  $\lambda(I) >0$ (which is then called the \emph{length} of the piece) and a $\nu\in\M_{\loc,\unif}(\R)$ supported on $I$.
  We abbreviate pieces by $\nu^I$.
  A \emph{finite piece} is a piece of finite length.
  We say $\nu^I$ \emph{occurs} in a measure $\mu$ at $x\in\R$, if $\1_{x+I}\mu$
  is a translate of $\nu$.

	The \emph{concatenation} $\nu^I=\nu_1^{I_1}\mid \nu_2^{I_2}\mid \ldots$ of a finite or countable family
    $(\nu_j^{I_j})_{j\in N}$, with $N\subseteq \N$, of finite pieces is defined by
	\begin{align*}
		I & = \left[\min I_1,\min I_1 + \sum_{j\in N} \lambda(I_j)\right),\\
		\nu & = \nu_1+\sum_{j\in N,\,j\geq 2} \nu_j\Big(\cdot-\Big(\min I_1 + \sum_{k=1}^{j-1} \lambda(I_k) - \min I_j\Big)\Big).
	\end{align*}
	We also say that $\nu^I$ is \emph{decomposed} by $(\nu_j^{I_j})_{j\in N}$.
\end{definition}

\begin{definition}
	Let $\mu$ be a measure on $\R$. We say that $\mu$ has the \emph{finite decomposition property} (f.d.p.),
	if there exist a finite set $\mathcal{P}$ of finite pieces (called the \emph{local pieces})
	and $x_0\in\R$, such that $\1_{[x_0,\infty)}\mu^{[x_0,\infty)}$ is a translate of a
	concatenation $v_1^{I_1}\mid \nu_2^{I_2}\mid\ldots$ with $\nu_j^{I_j}\in\mathcal{P}$ for all $j\in\N$.
	Without restriction, we may assume that $\min I = 0$ for all $\nu^I\in \mathcal{P}$.

	A measure $\mu$ has the \emph{simple finite decomposition property} (s.f.d.p.), if it has the f.d.p.~with a decomposition such that there is $\ell>0$ with the following property: Assume that the two pieces
	\[\nu_{-m}^{I_{-m}} \mid \ldots \mid \nu_{0}^{I_{0}} \mid \nu_{1}^{I_{1}} \mid \ldots \mid \nu_{m_1}^{I_{m_1}} \quad \text{and} \quad
	 \nu_{-m}^{I_{-m}} \mid \ldots \mid \nu_{0}^{I_{0}} \mid \mu_{1}^{J_{1}} \mid \ldots \mid \mu_{m_2}^{J_{m_2}}\]
	occur in the decomposition of $\mu$ with a common first part $\nu_{-m}^{I_{-m}} \mid \ldots \mid \nu_{0}^{I_{0}}$ of length at least $\ell$ and such that
	\[\1_{[0,\ell)}(\nu_{1}^{I_{1}} \mid \ldots \mid \nu_{m_1}^{I_{m_1}}) = \1_{[0,\ell)}(\mu_{1}^{J_{1}} \mid \ldots \mid \mu_{m_2}^{J_{m_2}}),\]
	where $\nu_j^{I_j}$, $\mu_k^{J_k}$ are pieces from the decomposition (in particular, all belong to $\mathcal{P}$ and start at $0$) and the latter two concatenations are of lengths at least $\ell$. Then
	\[\nu_1^{I_1} = \mu_1^{J_1}.\]
\end{definition}

\begin{lemma}
\label{lem:properties_mu}
  Let $(t_n)$ in $\R$, $(b_n)$ in $(1,\infty)$, $\mu= \sum_{n=1}^\infty \beta_n \delta_{t_n}$ be an atomic measure, where $\beta_n = \frac{\sqrt{b_n}+1}{\sqrt{b_n}-1}$ ($n\in\N$). Then:
  \begin{enumerate}
    \item
      If $(t_n)$ and $(b_n)$ satisfy \eqref{eq:edge} and \eqref{eq:branching} then $\mu \in \M_{\loc,\unif}(\R)$.
    \item
      $\mu$ is eventually periodic if and only if $((t_{n+1}-t_n,b_n))$ is eventually periodic.
    \item
      $\mu$ has the s.f.d.p. if $\set{t_{n+1}-t_{n};\; n\in\N}$ and $\set{b_n;\; n\in\N}$ are finite.
  \end{enumerate}
\end{lemma}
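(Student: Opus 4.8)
The plan is to use the obvious edge-based decomposition of $\mu$ and to reduce all three parts to elementary counting, with the genuine content sitting in part (c). Throughout I write $\ell_n := t_{n+1}-t_n$ and recall $\beta_n = \frac{\sqrt{b_n}+1}{\sqrt{b_n}-1}$. The map $b\mapsto \frac{\sqrt b+1}{\sqrt b-1} = 1 + \frac{2}{\sqrt b-1}$ is a strictly decreasing bijection of $(1,\infty)$ onto $(1,\infty)$, so knowing the weight $\beta_n$ is equivalent to knowing $b_n$; this lets me pass freely between statements about $(b_n)$ and statements about the atomic weights of $\mu$.

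For (a) I would estimate $\norm{\mu}_\loc$ directly. By \eqref{eq:edge} the points $(t_n)$ are $\gamma$-separated with $\gamma = \inf_n \ell_n > 0$, so each unit interval $[x,x+1]$ contains at most $1+1/\gamma$ of them; by \eqref{eq:branching} we have $\inf_n b_n>1$, whence $\sup_n\beta_n<\infty$ (the weights blow up only as $b\to 1\rlim$). Multiplying the two bounds gives $\norm{\mu}_\loc \le (1+1/\gamma)\sup_n\beta_n<\infty$. For (b), eventual periodicity of the index sequence $((\ell_n,b_n))$ with period $p$ from index $N$ on yields $t_{n+p}=t_n+T$ and $\beta_{n+p}=\beta_n$ for $n\geq N$, where $T=t_{N+p}-t_N$; hence $S_T\mu = \mu$ on $[t_N,\infty)$, i.e.\ $\mu$ is eventually periodic. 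Conversely, if $S_T\mu = \mu$ on some $[x_0,\infty)$, then translation by $T$ is an order-preserving bijection of the discrete, $\gamma$-separated atom set onto itself and thus shifts indices by a fixed integer $p$; reading off positions and weights gives $\ell_{n+p}=\ell_n$ and $b_{n+p}=b_n$ eventually, which is the asserted periodicity of the sequence.

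The heart of the lemma is (c). I would take as local pieces the single-edge pieces $\mathcal{P}:=\set{(\beta_n\delta_0,[0,\ell_n));\; n\in\N}$, each carrying exactly one atom at its left endpoint. Since $\set{\ell_n}$ and $\set{b_n}$ (equivalently $\set{\beta_n}$) are finite, $\mathcal{P}$ is finite. Concatenating these pieces in order, starting at $x_0:=t_1$, reproduces $\1_{[t_1,\infty)}\mu$ up to translation by $-t_1$, because the $j$-th piece places its atom at $\ell_1+\cdots+\ell_{j-1}=t_j-t_1$; this gives the f.d.p. For the \emph{simple} f.d.p.\ I would choose $\ell > \max_n\ell_n$, finite precisely because $\set{\ell_n}$ is finite. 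The key observation is that in this decomposition every piece begins with a genuine atom at its left endpoint, and the atoms of any tail concatenation sit at $0,\ell_1,\ell_1+\ell_2,\dots$; with $\ell$ larger than every edge length the window $\1_{[0,\ell)}$ of a tail always exhibits both its first atom (at $0$) and its second atom (at position $\ell_1<\ell$). Hence agreement of two tails on $[0,\ell)$ forces the weight at $0$ to coincide, determining $\beta$ and thus $b$, and forces the location of the second atom to coincide, determining the length $\ell_1$ of the first piece. Both coordinates of the first piece being pinned down, the first pieces must be equal, which is exactly the s.f.d.p.\ condition.

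The one point requiring care, and the step I expect to be the main obstacle, is making sure the comparison window actually sees the \emph{length} of the first piece and not merely its weight: a window shorter than the longest edge could fail to reach the second atom, leaving $\ell_1$ undetermined while $\beta$ is fixed, so that two pieces of equal weight but different length would violate the implication. The finiteness of $\set{\ell_n}$ is used exactly here, to produce a single $\ell$ exceeding all edge lengths simultaneously. I would also record that along the concatenation the atoms remain $\gamma$-separated by \eqref{eq:edge}, so that ``reading off the second atom's position'' is unambiguous and no two atoms can merge at a cut.
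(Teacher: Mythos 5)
Your proof is correct and follows essentially the same route as the paper: the same bound for (a) via bounded weights and separated atoms, and in (c) the identical single-edge decomposition $\mathcal{P}=\set{(\beta_n\delta_0,[0,t_{n+1}-t_n))}$ with one atom at each left endpoint. The only difference is that you make explicit the choice $\ell>\max_n(t_{n+1}-t_n)$ needed so that the comparison window always reaches the second atom --- a point the paper's terse ``directly implies'' glosses over --- which is a worthwhile clarification rather than a deviation.
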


\begin{proof}
  In order to prove (a) we remark that $(\beta_n)$ is bounded by \eqref{eq:branching} and that
  \[\norm{\mu}_\loc \leq \frac{\sup_{n\in\N} \beta_n}{\inf_{n\in\N} (t_{n+1}-t_{n})}.\]
  Part (b) is clear by definition of $\mu$. To prove (c) note that $\mu$ can be
  decomposed by
  $\mathcal{P}:=\set{(\1_{[t_{n},t_{n+1})}\beta_n\delta_{t_n})(\cdot+t_n);\;
  n\in\N}$ and this set is, by assumption, finite.
  To show that the decomposition is simple note that
  \[\1_{[0,\ell)}(\nu_{1}^{I_{1}} \mid \ldots \mid \nu_{m_1}^{I_{m_1}}) = \1_{[0,\ell)}(\mu_{1}^{J_{1}} \mid \ldots \mid \mu_{m_2}^{J_{m_2}}),\]
  where all the $\nu_j$'s and $\mu_j$'s are elements from $\mathcal{P}$ directly implies that $\nu_1^{I_1} = \mu_1^{J_1}$.
\end{proof}

\begin{theorem}
\label{thm:nonempty_eventually_periodic}
  Let $(t_n)$ in $(0,\infty)$ and $(b_n)$ satisfy \eqref{eq:edge} and \eqref{eq:branching}. Assume that the corresponding measure $\mu$ has the s.f.d.p.
  Then, if $H_\mu^+$ has nonempty absolutely continuous spectrum the measure $\mu$ is eventually periodic.
\end{theorem}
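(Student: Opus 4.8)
The plan is to run, in our continuous setting, the determinism argument that absolutely continuous spectrum forces on a measure of finite local complexity, following the strategy of \cite[Theorem 4.1]{KlassertLenzStollmann2011} but fed by our Oracle Theorem (Theorem \ref{thm:oracle_thm}). First I would fix the data. By Lemma \ref{lem:properties_mu}(a) together with \eqref{eq:edge} and \eqref{eq:branching}, the measure $\mu$ lies in $\M_a^{C,\gamma,\rlim}(\R)$ for suitable $C>0$, with atoms separated by $\gamma:=\inf_n(t_{n+1}-t_n)>0$. Since $H_\mu^+$ has nonempty absolutely continuous spectrum, the essential support $\Lambda:=\Sigma_{ac}(H_\mu^+)$ has positive Lebesgue measure. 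The s.f.d.p.\ furnishes a finite set $\mathcal{P}$ of local pieces and a length $\ell>0$; write the tail of $\mu$ as a concatenation $\nu_1^{I_1}\mid\nu_2^{I_2}\mid\ldots$ of pieces from $\mathcal{P}$, and let $\nu_n$ be the piece starting at $t_n$, which carries the single atom $\beta_n\delta_{t_n}$ and thus records exactly the pair $(t_{n+1}-t_n,b_n)$. By Lemma \ref{lem:properties_mu}(b) it suffices to prove that the sequence $(\nu_n)$ is eventually periodic.

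Next I would set up the prediction. Choose a future window $(a,b)\supseteq[0,\ell)$ and apply Theorem \ref{thm:oracle_thm} to $\Lambda$, obtaining $L>0$, a continuous predictor $\triangle\from\M^C(-L,0)\to\M^C(a,b)$, and a threshold $x_0>0$ with $d(\triangle(\1_{(-L,0)}S_{t_n}\mu),\1_{(a,b)}S_{t_n}\mu)<\varepsilon$ for every $n$ with $t_n\geq x_0$. The crucial point is that finite local complexity upgrades this \emph{approximate} prediction to an \emph{exact} one: since $\mu$ is a concatenation of the finitely many pieces of $\mathcal{P}$ (whose lengths are bounded below by $\gamma$ and above since $\mathcal{P}$ is finite) and since the windows are anchored at the atom positions $t_n$, the restrictions $\1_{(a,b)}S_{t_n}\mu$ range over a \emph{finite} set of patterns. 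Choosing $\varepsilon$ smaller than half the minimal mutual distance of this finite set, two indices with identical past windows $\1_{(-L,0)}S_{t_n}\mu$ receive the same prediction $\triangle(\cdot)$, so their future windows lie within $2\varepsilon$ of each other and hence coincide.

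I would then extract a deterministic finite-state rule. Taking the ``state'' $w_n$ to be the finite word of pieces of $\mathcal{P}$ covering $[t_n-L',t_n)$ with $L':=\max(L,\ell)$, the state ranges over a finite set (its length is at most $L'/\gamma+1$), and because each piece carries a single atom the state determines, and is determined by, the window measure $\1_{[-L',0)}S_{t_n}\mu$. If $w_n=w_{n'}$ (with $t_n,t_{n'}\geq x_0$) then in particular $\1_{(-L,0)}S_{t_n}\mu=\1_{(-L,0)}S_{t_{n'}}\mu$, whence by the previous step the future windows coincide; this supplies a common first part of length $\geq\ell$ together with continuations agreeing on $[0,\ell)$, so the \emph{simple} clause of the s.f.d.p.\ forces $\nu_n=\nu_{n'}$. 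Thus $w_n$ determines $\nu_n$, and $\nu_n$ in turn fixes the shift $t_{n+1}-t_n$ and the newly entering piece, hence determines $w_{n+1}=\Phi(w_n)$ for a map $\Phi$ on a finite set. A deterministic orbit in a finite set is eventually periodic, so $(w_n)$ and therefore $(\nu_n)$, equivalently $((t_{n+1}-t_n,b_n))$, is eventually periodic; Lemma \ref{lem:properties_mu}(b) then yields that $\mu$ is eventually periodic.

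The main obstacle I anticipate is precisely the passage from the Oracle Theorem's approximate prediction to an exact identification of the next piece. This dovetails two independent uses of the hypotheses that must be handled together: the finiteness of the collection of window patterns (so that ``within $2\varepsilon$'' becomes ``equal''), which is where finite local complexity enters, and the simplicity clause of the s.f.d.p.\ (so that agreement of the future windows on $[0,\ell)$ pins down a unique next piece). The technical care lies in reconciling the oracle's window length $L$ with the s.f.d.p.\ length $\ell$ and in the bookkeeping of how pieces overlapping the window boundary are truncated; once the transition map $\Phi$ is shown to be well defined on a finite set, the remainder is the standard fact that deterministic sequences over a finite alphabet are eventually periodic.
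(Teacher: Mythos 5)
Your proposal runs on the same three pillars as the paper's own proof: the Oracle Theorem supplies an approximate prediction of the future window from the past window, finiteness of the set of local patterns upgrades ``within $2\varepsilon$'' to exact equality, and the simplicity clause of the s.f.d.p.\ then pins down the next piece. The only structural difference is the packaging of the endgame: the paper finds, by pigeonhole on a coarse grid, \emph{one} pair of positions whose $L$-windows are translates and then propagates that single coincidence piece by piece to exhibit two translate-equal tails, whereas you build a transition map $\Phi$ on a finite state space and quote eventual periodicity of deterministic orbits. That difference is cosmetic; both closings work.

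There is, however, a genuine gap at the very first step: you assume that the decomposition furnished by the s.f.d.p.\ hypothesis is anchored at the atoms, i.e.\ that ``the piece starting at $t_n$ carries the single atom $\beta_n\delta_{t_n}$ and records exactly the pair $(t_{n+1}-t_n,b_n)$''. Nothing in the hypothesis licenses this: the s.f.d.p.\ provides \emph{some} finite set $\mathcal{P}$ of pieces, whose grid points need not be atoms and whose pieces may contain several atoms or none. Concretely, take $A:=\beta\delta_0$ on $[0,1)$ and $B:=0$ on $[0,1)$ (with $\beta=3$, i.e.\ $b_n=4$) and let $\mu$ be decomposed as $A\,B\,A\,B\,B\,A\,B\,B\,B\,A\cdots$; this $\mu$ satisfies \eqref{eq:edge}, \eqref{eq:branching} and has the s.f.d.p.\ with $\ell=1$ (the window $[0,1)$ distinguishes $A$ from $B$), yet the piece starting at $t_n$ is always $A$, so your sequence $(\nu_n)$ is constant --- hence eventually periodic --- while $\mu$ is \emph{not} eventually periodic and the gaps $t_{n+1}-t_n$ take infinitely many values. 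On such a decomposition your reduction via Lemma \ref{lem:properties_mu}(b) (``it suffices that $(\nu_n)$ be eventually periodic''), your claim that the state plus $\nu_n$ determines the shift $t_{n+1}-t_n$ and hence $w_{n+1}=\Phi(w_n)$, and your invocation of the simplicity clause at the points $t_n$ (it only applies to concatenations of pieces of the \emph{given} decomposition, anchored at its grid points) all break down. The repair is exactly what the paper does: anchor everything --- the windows, the pigeonhole/states, and the simplicity clause --- at the grid $G$ of the given decomposition, applying Theorem \ref{thm:oracle_thm} at points $x\in G$ (legitimate, since its conclusion holds for all $x\geq x_0$), and conclude eventual periodicity of $\mu$ itself rather than of a derived symbol sequence. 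Two smaller points to fix as well: $\varepsilon$ must be chosen from the finite set of window patterns \emph{before} invoking Theorem \ref{thm:oracle_thm}, since $\varepsilon$ is an input and $L$ an output of that theorem (harmless, as the pattern set does not depend on $L$); and $L'=\max(L,\ell)$ is too short, because after truncating the piece straddling the left end of the window you still need a common concatenation of \emph{whole} pieces of length at least $\ell$, so you need something like $L'\geq\max(L,\ell+\mathcal{L})$, where $\mathcal{L}$ denotes the maximal length of the pieces in $\mathcal{P}$.
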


\begin{proof}
  Assume that $\sigma_{ac}(H_\mu^+)$ is nonempty. Then $\Sigma_{ac}(H_\mu^+)$ has positive measure.
  Let $\ell$ and $\mathcal{L}$ be the minimum and maximum of the length of the finitely many pieces of $\mu$ according to s.f.d.p., respectively, and $G:=\set{x_k;\; k\in\N_0}$ be the grid.
  Let
  \[\mathcal{D}:= \set{\1_{(-1,\ell)}(S_{x}\mu);\; x\in G}.\]
  Then $\mathcal{D}$ is finite and hence there exists $\varepsilon>0$ such that $d(\nu_1,\nu_2)>2\varepsilon$ for $\nu_1,\nu_2\in \mathcal{D}$ with $\nu_1\neq \nu_2$.
  Let $a:=-1$ and $b:=\ell$ and choose $L>\ell$ according to Theorem \ref{thm:oracle_thm}.

  We construct a coarser grid $G_L\subseteq G$ by $y_0:=x_0$, $y_{k+1}-y_k\in [L,L+\mathcal{L}]$. Since $G\cap [y_k,y_{k+1}]$ is finite, also
  \[\mathcal{P}_L:= \set{\bigl(S_{y_{k}}(\1_{[y_k,y_{k+1})}\mu), S_{y_{k}}([y_k,y_{k+1}))\bigr);\; k\in\N}\]
  is finite. Hence, there exists $k\in\N$ such that infinitely many translates of $\1_{[y_k,y_{k+1})}\mu$ occur in $\mu$, so that the corresponding parts of $G$ are translates of $G\cap[y_k,y_{k+1})$.

  Let $z^1:=y_{m+1}$, where $y_m$ is one of the corresponding points in $G_L$, $m>k$ and denote $y^1:=y_{k+1}$. By Theorem \ref{thm:oracle_thm} applied with $x=y^1$ and $x=z^1$ we obtain
  \[d(\1_{(-1,\ell)}(S_{y^1}\mu), \1_{(-1,\ell)}(S_{z^1}\mu)) \leq 2\varepsilon.\]
  Hence, $\1_{(-1,\ell)}(S_{y^1}\mu) = \1_{(-1,\ell)}(S_{z^1}\mu)$ by the choice of $\varepsilon$.
  Moreover, we know that the pieces of $\mu$ starting at $y_k$ and $y_m$, respectively, are decomposed in the same way.
  The s.f.d.p~yields that the pieces starting at $y^1$ and $z^1$ are translates from each other,
  i.e., for $y^2:=\min G\cap (y^1,\infty)$ and $z^2:=\min G\cap (z^1,\infty)$ we have
  \[z^2-z^1 = y^2-y^1,\quad \text{and}\quad S_{y^1}(\1_{[y^1,y^2)}\mu) = S_{z^1}(\1_{[z^1,z^2)}\mu).\]
  Thus, $\1_{[y_k,y^2)}\mu$ is a translate of $\1_{[y_m,z^2)}\mu$. Iterating, we obtain sequences $(y^n)$ and $(z^n)$ in $G$ such that $\1_{[y_k,y^n)}\mu$ is a translate of $\1_{[y_m,z^n)}\mu$ for all $n\in\N$.
  Since $y^{n+1}-y^n = z^{n+1}-z^n\geq \ell$ for all $n\in\N$ we obtain that $\1_{[y_k,\infty)}\mu$ is a translate of $\1_{[y_m,\infty)}\mu$, i.e., $\mu$ is eventually periodic.
\end{proof}

\section{Absence of absolutely continuous spectrum on trees}

We can now state our main theorem, which is the analogue of \cite[Theorem
1]{BreuerFrank2009} for radially symmetric metric tree graphs.

\begin{theorem} \label{thm:main}
  Let $(t_n)$ in $(0,\infty)$ and $(b_n)$ in $(1,\infty)$ satisfy \eqref{eq:edge} and \eqref{eq:branching}. Assume that $\set{t_{n+1}-t_{n};\; n\in\N}$ and $\set{b_n;\; n\in\N}$ are finite.
  Then, if $H_\Gamma$ has nonempty absolutely continuous spectrum the sequence $((t_{n+1}-t_{n},b_n))$ is eventually periodic.
\end{theorem}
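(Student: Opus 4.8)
The plan is to assemble the three ingredients the introduction promised, with the previously established results doing almost all of the work. The strategy is to pass from the tree operator $H_\Gamma$ to the halfline operators of Proposition~\ref{prop:unitary_equivalence}, translate everything into the language of the associated atomic measure $\mu = \sum_n \beta_n \delta_{t_n}$ with $\beta_n = \frac{\sqrt{b_n}+1}{\sqrt{b_n}-1}$, and then quote Theorem~\ref{thm:nonempty_eventually_periodic}. Concretely, I would first invoke Proposition~\ref{prop:unitary_equivalence} to write $H_\Gamma$ as the direct sum $A_0^{\rlim} \oplus \bigoplus_{k\geq 1}(A_k^{\rlim}\otimes I)$. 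Since the absolutely continuous spectrum of a direct sum is the union (up to closure/essential support) of the pieces, the hypothesis that $\sigma_{ac}(H_\Gamma)\neq\emptyset$ forces $\sigma_{ac}(A_k^{\rlim})\neq\emptyset$ for at least one $k$. Tensoring with the identity $I_{\C^{b_1\cdots b_{k-1}(b_k-1)}}$ only changes multiplicity, not the spectrum as a set, so this reduction is harmless.

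Next I would use the remark after Proposition~\ref{prop:unitary_equivalence} that each $A_k^{\rlim}$ is unitarily equivalent by translation to $H_{\mu_k}^{\rlim}$ for a suitable atomic measure $\mu_k$; explicitly, $\mu_k$ is the restriction-and-translate of $\mu$ corresponding to starting the tree at generation $k$. Thus some $H_{\mu_k}^{\rlim}$ has nonempty absolutely continuous spectrum. I would then check that $\mu_k$ inherits the structural hypotheses: the finiteness of $\set{t_{n+1}-t_n}$ and $\set{b_n}$ is not disturbed by discarding finitely many initial generations, and conditions \eqref{eq:edge}, \eqref{eq:branching} persist; by Lemma~\ref{lem:properties_mu}(c) the measure $\mu_k$ therefore has the s.f.d.p.

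Now Theorem~\ref{thm:nonempty_eventually_periodic} applies directly to $H_{\mu_k}^{\rlim}$ and yields that $\mu_k$ is eventually periodic. The final step is to transfer eventual periodicity back along the chain. Eventual periodicity of $\mu_k$ is, by definition, a tail property, so it is equivalent to eventual periodicity of the full measure $\mu$; by Lemma~\ref{lem:properties_mu}(b), $\mu$ is eventually periodic if and only if the sequence $((t_{n+1}-t_n, b_n))$ is eventually periodic, which is precisely the claimed conclusion. I expect the one point requiring genuine care to be the direct-sum spectral reduction in the first step: I must argue cleanly that nonempty $\sigma_{ac}$ of the whole sum, combined with the fact that only finitely many distinct edge-length/branching data occur, produces a \emph{single} summand $A_k^{\rlim}$ whose measure $\mu_k$ both carries the absolutely continuous spectrum \emph{and} satisfies the hypotheses of Theorem~\ref{thm:nonempty_eventually_periodic}; everything after that is a matter of matching definitions.
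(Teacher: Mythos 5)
Your proposal is correct and follows essentially the same route as the paper's own proof: reduce via Proposition~\ref{prop:unitary_equivalence} to a halfline operator $H_{\mu_k}^{\rlim}$ with $\mu_k = S_{t_k}(\1_{(t_k,\infty)}\mu)$, observe via Lemma~\ref{lem:properties_mu} that $\mu_k$ inherits the s.f.d.p., apply Theorem~\ref{thm:nonempty_eventually_periodic} to get eventual periodicity of $\mu_k$, and transfer back to $\mu$ and hence to $((t_{n+1}-t_n,b_n))$. If anything, you are more explicit than the paper on the two points it glosses over (that nonempty $\sigma_{ac}$ of the infinite direct sum forces nonempty $\sigma_{ac}$ of a \emph{single} summand, and that eventual periodicity is a tail property), so no further care is actually needed at the step you flagged.
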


\begin{proof}
  By Proposition \ref{prop:unitary_equivalence} it suffices to prove the statement for all $A_k^+$, $k\geq 0$.

  Note that, for $k\geq 0$, $A_k^+$ is unitarily equivalent to $H_{\mu_k}^+$, where $\mu_k:= S_{t_k}(\1_{(t_k,\infty)}\mu)$ and $\mu$ is associated to $((t_n,b_n))$.
  Indeed, the unitary transformation is just the shift by $t_k$.
  Since $\mu$ has the s.f.d.p.~by Lemma \ref{lem:properties_mu}, clearly also $\mu_k$ has the s.f.d.p.~for all $k\geq 0$.
  Theorem \ref{thm:nonempty_eventually_periodic} proves that $\mu_k$ is eventually periodic for all $k\geq 0$. Thus, also $\mu$ is eventually periodic. By Lemma \ref{lem:properties_mu} we conclude that
  $((t_{n+1}-t_{n},b_n))$ is eventually periodic.
\end{proof}

\begin{remark}
Kirchhoff conditions are the simplest but by far not the only way to couple the tree edges in a self-adjoint way. In particular, a wide class of boundary conditions on rooted radially symmetric metric tree graphs was considered in \cite{ExnerLipovsky2010} 
and it was shown, in analogy with the corresponding result in 
\cite{BreuerFrank2009}, that if such a tree is sparse the absolutely continuous
 spectrum is absent. One can treat such boundary conditions also in the present
 context. With the help of \cite[Theorem 6.4]{ExnerLipovsky2010} one can prove 
an Oracle Theorem similar to Theorem \ref{thm:oracle_thm} for these more 
general boundary conditions, and furthermore, one can  associate with them two 
or three atomic measures \cite[Section VI]{ExnerLipovsky2010}; if all of them 
are s.f.d.p. and at least one is nontrivial, then one can derive the result on 
absence of absolutely continuous spectrum analogous to Theorem~\ref{thm:main}. 
We stress the nontriviality requirement: there is a subset of boundary 
conditions \cite[Example~7.1]{ExnerLipovsky2010} which are mapped by the 
unitary equivalence of Section~2 to free motion on the family of halflines, 
hence the absolutely continuous spectrum is present in such cases, in fact 
covering the whole positive halfline, irrespective of the edge lengths.
\end{remark}

%
%
%
%

\subsection*{Acknowledgments}
The research was supported by the Czech Science Foundation  within
the project P203/11/0701.

\bigskip

\noindent
Pavel Exner \\
Doppler Institute for Mathematical Physics and Applied Mathematics \\
Faculty of Nuclear Sciences and Physical Engineering \\
Czech Technical University \\
B\v{r}ehov{\'a} 7 \\
11519 Prague, Czech Republic \\
and \\
Nuclear Physics Institute ASCR \\
25068 \v{R}e\v{z} near Prague, Czech Republic \\
{\tt exner@ujf.cas.cz}

\bigskip

\noindent
Christian Seifert \\
Institut f\"ur Mathematik \\
Technische Universit\"at Hamburg-Harburg \\
21073 Hamburg, Germany \\
{\tt christian.seifert@tuhh.de}

\bigskip

\noindent
Peter Stollmann \\
Technische Universit\"at Chemnitz \\
Fakult\"at f\"ur Mathematik \\
09107 Chemnitz, Germany \\
{\tt P.Stollmann@mathematik.tu-chemnitz.de}
\end{document}